\newtheorem{theorem}{Theorem}[section]
\newtheorem{proposition}[theorem]{Proposition}
\newtheorem{corollary}[theorem]{Corollary}
\newtheorem{definition}[theorem]{Definition}
\newtheorem{conjecture}[theorem]{Conjecture}
\theoremstyle{plain}
\theoremstyle{remark}
\newtheorem{remark}[theorem]{Remark}
\newtheorem{remarks}[theorem]{Remarks}
\newcommand{\C}{{\mathbb C}}
\newcommand{\F}{{\mathbb F}}
\newcommand{\G}{{\mathbb G}}
\newcommand{\Q}{{\mathbb Q}}
\newcommand{\Z}{{\mathbb Z}}
\newcommand{\N}{{\mathbb N}}
\newcommand{\Gm}{\mathbb{G}_{\text{m}}}
\newcommand{\x}{{\bf x}}
\renewcommand{\u}{{\bf u}}
\DeclareMathOperator{\Id}{Id}
\DeclareMathOperator{\End}{End}
\newcommand{\bG}{{\mathbb G}}
\newcommand{\bF}{{\mathbb F}}
\newcommand{\Fq}{\bF_q}
\newcommand{\lra}{\longrightarrow}
\newcommand{\cO}{\mathcal{O}}
\author{P. Corvaja}
\address{Pietro Corvaja\\
Dipartimento di Scienze Matematiche\\
Informatiche e Fisiche\\ 
Universit\`a di Udine\\
via delle Scienze, 206\\
33100 Udine\\
Italy}
\email{pietro.corvaja@uniud.it}
\author{D. Ghioca}
\address{
Dragos Ghioca\\
Department of Mathematics\\
University of British Columbia\\
1984 Mathematics Road\\
Vancouver, BC V6T 1Z2\\
Canada
}
\email{dghioca@math.ubc.ca}
\author{T. Scanlon}
\address{Thomas Scanlon\\
University of California, Berkeley\\
Mathematics Department\\
Evans Hall\\
Berkeley, CA 94720-3840\\
USA}
\email{scanlon@math.berkeley.edu}
\author{U. Zannier}
\address{Umberto Zannier\\
Scuola Normale Superiore\\
Piazza dei Cavalieri, 7\\
56126 Pisa\\
Italy}
\email{u.zannier@sns.it}
\thanks{The second author has been partially supported by a Discovery Grant from
the National Science and Engineering Board of Canada.  
The third author has been partially supported by 
grant DMS-1363372 of the United States National Science
Foundation and a Simons Foundation Fellowship.}
\keywords{Dynamical Mordell-Lang problem, endomorphisms of semiabelian varieties defined over fields of characteristic $p$}
\subjclass[2010]{Primary: 11G10, Secondary: 37P55.}
\begin{document}
	\title[The dynamical Mordell-Lang problem]{The dynamical Mordell-Lang conjecture for endomorphisms of semiabelian varieties defined over fields of  positive characteristic}

\begin{abstract}
Let $K$ be an algebraically closed field of prime characteristic $p$, let $X$ be a semiabelian variety defined over a finite subfield of $K$, let $\Phi:X\lra X$ be a regular self-map defined over $K$, let $V\subset X$ be a subvariety defined over $K$, and let $\alpha\in X(K)$. The Dynamical Mordell-Lang Conjecture in characteristic $p$ predicts that the set $S=\{n\in\N\colon \Phi^n(\alpha)\in V\}$ is a union of finitely many arithmetic progressions, along with finitely many $p$-sets, which are sets of the form $\left\{\sum_{i=1}^m c_i p^{k_in_i}\colon n_i\in\N\right\}$ for some $m\in\N$, some rational numbers $c_i$ and some non-negative integers $k_i$. We prove that this conjecture is equivalent with some difficult diophantine problem in characteristic $0$. In the case $X$ is an algebraic torus, we can prove the conjecture in two cases: either when $\dim(V)\le 2$, or when no iterate of $\Phi$ is a group endomorphism which induces the action of a power of the Frobenius on a positive dimensional algebraic subgroup of $X$. 
\end{abstract}

	\maketitle


 \section{Introduction}
\label{sec:introduction}

In this paper, as a matter of convention, any subvariety of a given variety is assumed to be closed.  
We denote by $\N$ the set of positive integers, we let 
$\N_0:=\N\cup\{0\}$, and let $p$ be a prime number. 
An arithmetic progression is a set of the form
$\{mk+\ell:\ k\in\N_0\}$  
for some $m,\ell\in\N_0$; note that when $m=0$, this set
is a singleton. 
For a set $X$ endowed with a self-map $\Phi$, and for $m\in\N_0$, we let $\Phi^m$
denote the $m$-th iterate $\Phi\circ\cdots\circ \Phi$, where $\Phi^0$
denotes the identity map on $X$. If $\alpha\in X$, we define its orbit under $\Phi$ as 
$\cO_\Phi(\alpha):=\{\Phi^n(\alpha)\colon n\in\N_0\}$.  

Motivated by the classical Mordell-Lang conjecture proved by Faltings \cite{Fal94} (for abelian varieties) and Vojta \cite{Voj96} (for semiabelian varieties), the Dynamical Mordell-Lang Conjecture predicts that for a quasiprojective variety $X$ endowed with a (regular) self-map $\Phi$ defined over a field $K$ of characteristic $0$, given a point 
$\alpha\in X(K)$ and a subvariety $V$ of $X$, the set $$S:=\{n\in\N_0\colon \Phi^n(\alpha)\in V(K)\}$$ 
is a finite union of arithmetic progressions (see \cite[Conjecture~1.7]{GT-JNT} and also, for a thorough discussion of the Dynamical Mordell-Lang Conjecture, see the book \cite{book}).  

With the above notation for $X,\Phi,K,V,\alpha,S$, if $K$ has characteristic $p$  then $S$ may be infinite without containing an infinite arithmetic progression (see \cite[Example~3.4.5.1]{book} and \cite[Example~1.2]{G-TAMS}). Similar to the classical Mordell-Lang conjecture in characteristic $p$, one has to take into account varieties defined over finite fields (see the proof of the classical Mordell-Lang conjecture in positive characteristic by Hrushovski \cite{Hrushovski}). So, motivated by the structure theorem of Moosa-Scanlon \cite{Moosa-Scanlon} describing in terms of $F$-sets the intersection of a subvariety of a semiabelian variety (defined over a finite field) with a finitely generated group (see also \cite{groups}), the following conjecture was advanced.   

\begin{conjecture}[Dynamical Mordell-Lang Conjecture in positive characteristic]
\label{char p DML}
Let $X$ be a quasiprojective variety defined over a field $K$ of characteristic $p$. Let $\alpha\in X(K)$, let $V\subseteq X$ be a subvariety defined over $K$, and let $\Phi:X\lra X$ be an endomorphism defined over $K$. Then the set $S:=S(X,\Phi,V,\alpha)$ consisting of all $n\in\N_0$ such that $\Phi^n(\alpha)\in V(K)$ is a union of finitely many arithmetic progressions along with finitely many sets of the form
\begin{equation}
\label{form of the solutions}
\left\{\sum_{j=1}^{m} c_j p^{k_j n_j}\text{ : }n_j\in\N_0\text{ for each }j=1,\dots m\right\},
\end{equation}
for some $m\in\N$, some $c_j\in\Q$, and some $k_j\in\N_0$. 
\end{conjecture}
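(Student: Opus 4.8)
The plan is to establish Conjecture~\ref{char p DML} for a semiabelian variety $X$ defined over a finite field $\F_q\subseteq K$ by reducing it to a Diophantine problem over a field of characteristic $0$, and then to settle that problem unconditionally in the two torus cases announced in the abstract. First I would use rigidity for semiabelian varieties to write any regular self-map as $\Phi(x)=\psi(x)+c$ with $\psi\in\End(X)$ and $c\in X(K)$. Since $\End(X)$ is a finitely generated $\Z$-module (it embeds into $\End$ of the maximal subtorus of $X$ times $\End$ of its abelian quotient), the ring $\Z[\psi]$ is a finitely generated $\Z$-module, so $\Gamma:=\Z[\psi]\cdot\langle\alpha,c\rangle$ is a finitely generated subgroup of $X(K)$; from $\Phi^{n+1}(\alpha)=\psi(\Phi^n(\alpha))+c$ and the $\Z$-linearity of $\psi$ on $\Gamma$ one sees that $\cO_\Phi(\alpha)\subseteq\Gamma$ and that $a_n:=\Phi^n(\alpha)$ is the orbit of a $\Z$-affine-linear self-map of $\Gamma$. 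Thus $S=\{n\in\N_0:a_n\in V\cap\Gamma\}$, and it suffices to understand how the orbit sequence $(a_n)$ meets the finitely generated group $\Gamma$ inside $V$.

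Next I would invoke the Moosa-Scanlon structure theorem: since $X$ is defined over a finite field and $\Gamma$ is finitely generated, $V(K)\cap\Gamma$ is a finite union of $F$-sets, each of the form (a coset of a subgroup $H\le\Gamma$) plus a ``Frobenius part'' $\{\sum_{i} F^{n_i}(\delta_i):n_i\in\N_0\}$, where $F$ denotes the $q$-power Frobenius of $X$. Computing $S$ then splits into two tasks. For the coset pieces $\gamma_0+H$: after choosing a splitting $\Gamma\to\Gamma/H$ and separating torsion, the condition $a_n\in\gamma_0+H$ says that a $\Z$-valued linear recurrence sequence, built from the relevant coordinates of $a_n$, attains a prescribed value, and by Skolem-Mahler-Lech the set of such $n$ is a finite union of arithmetic progressions. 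For a Frobenius part, writing $a_n$ and the $F^{n_i}(\delta_i)$ in coordinates of $\Gamma$ turns the membership of $a_n$ into an equation between quantities that are exponential in $n$ (with bases among the eigenvalues of $\psi$) and exponential in the $n_i$ with bases powers of $q$; in the torus case this is literally an $S$-unit equation $\sum_{k} c_k u_k=0$ in a fixed finitely generated subgroup of $\Kbar^\times$. Describing the projection onto the $n$-coordinate of the solution set of such a system is precisely the characteristic-$0$ Diophantine problem to which the conjecture is reduced, and it is the ``Frobenius-matching'' solutions here that assemble into the $p$-sets of \eqref{form of the solutions}.

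For $X=\mathbb{G}_m^d$, so that $\psi$ is an integer matrix, the two unconditional cases come out as follows. If no iterate of $\Phi$ induces a power of the Frobenius on a positive-dimensional subtorus, then $\psi$ has no eigenvalue equal to a root of unity times a power of $q$; consequently the coordinates of $(a_n)$ carry no $q$-power component, the Frobenius parts above can be met only finitely often (by the theory of $S$-unit equations, since the bases $\lambda^n$ and $q^{n_i}$ are then multiplicatively independent), and $S$ reduces to the coset analysis, giving a finite union of arithmetic progressions --- equivalently, one recovers the known Dynamical Mordell-Lang statement for $\mathbb{G}_m^d$ in characteristic $0$. If instead $\dim V\le 2$, the associated characteristic-$0$ Diophantine statement is one about $S$-unit equations, or about integral points on curves and surfaces, that is accessible to the Subspace Theorem in the style of the Corvaja-Zannier method even when Frobenius eigenvalues are present; carrying this out produces the arithmetic progressions together with finitely many $p$-sets, whose parameters $c_j\in\Q$ and $k_j\in\N_0$ are read off from the eigenvalue data of $\psi$.

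The main obstacle is the last step in full generality: when $\dim V$ is large \emph{and} $\psi$ carries Frobenius-type eigenvalues, the characteristic-$0$ Diophantine problem one is left with amounts to a uniform bound on the number of nondegenerate solutions of $S$-unit equations with an unbounded number of terms --- equivalently, a version of Mordell-Lang allowing Frobenius twists --- which is not presently available; this is exactly why for general semiabelian $X$ one can only prove that Conjecture~\ref{char p DML} is \emph{equivalent} to such a characteristic-$0$ statement rather than prove it outright. The reverse implication, that the conjecture in turn implies the characteristic-$0$ statement, is comparatively soft: one exhibits explicit triples $(X,\Phi,V)$ whose associated set $S$ encodes the Diophantine data. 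A secondary but genuine difficulty, present even in the two torus cases, is the bookkeeping required to guarantee that only \emph{finitely many} $p$-sets arise and that the $F$-set decomposition, the coset pullbacks and the $S$-unit analysis fit together into precisely the shape demanded by \eqref{form of the solutions}.
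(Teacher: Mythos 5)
You are addressing a statement that is a conjecture: the paper does not prove Conjecture~\ref{char p DML}, and you rightly do not claim to either. Your outline --- place the orbit inside a finitely generated group $\Gamma$ via $\Phi=\psi+c$ with $\psi\in\End(X)$, apply the Moosa--Scanlon theorem to $V(K)\cap\Gamma$, convert membership of $\Phi^n(\alpha)$ in each $F$-set into equations between linear recurrence sequences in $n$ and sums of powers of $p$ in the auxiliary variables $n_i$ (Skolem--Mahler--Lech, Proposition~\ref{Skolem result}, for the pure coset pieces; Laurent's theorem when the characteristic roots are multiplicatively independent of $p$), and conversely encode polynomial--exponential equations as dynamical return sets --- is exactly the paper's strategy (Theorem~\ref{theorem reduction}, Theorem~\ref{theorem reduction to polynomial-exponential equations}, Section~\ref{section proofs}). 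So as a reduction of the conjecture to a characteristic-$0$ Diophantine problem, your route coincides with the paper's.

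However, in the two torus cases you announce as unconditional there are genuine gaps. For $\dim V\le 2$ you never justify why at most two nontrivial powers of $p$ occur in each equation: this requires the refinement that every sum of $F$-orbits in $V\cap\Gamma$ has at most $\dim(V)$ nontrivial orbits (Remark~\ref{remark s dim V}, Corollary~\ref{precise intersection surface}, resting on the Moosa--Scanlon proof or Derksen--Masser). More seriously, the equation $u_n=c_1p^{k_1n_1}+c_2p^{k_2n_2}$ in the only regime left after Laurent's theorem --- when every characteristic root of $\{u_n\}$ is itself a power of $p$ --- is \emph{not} an off-the-shelf $S$-unit or Subspace Theorem statement: the paper must prove Theorem~\ref{theorem polynomial-exponential}~(B) by a bespoke argument (Puiseux expansion of $n$ in terms of $q=c_1p^{n_1}+c_2p^{n_2}$, the Corvaja--Zannier theorem on $S$-unit points on analytic hypersurfaces applied at an archimedean or a $p$-adic place according to whether $n_1<n_2/2$, and a separate analysis when $n_2-n_1$ stays bounded), and it explicitly notes that earlier results cannot be quoted directly. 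Writing that the statement is ``accessible to the Subspace Theorem in the style of Corvaja--Zannier'' leaves precisely this, the technical heart of Theorem~\ref{surface result}, unproved. In the Frobenius-free case your phrase ``met only finitely often'' is also too quick: Laurent's theorem yields finiteness only after splitting into non-degenerate subsequences along arithmetic progressions, which is why the conclusion is a finite union of arithmetic progressions rather than a finite set; and the hypothesis of Theorem~\ref{Frobenius result} translates into multiplicative independence of each root of $P_{{\rm min},\Phi}$ from $p$ (via the subgroup $\ker(\Phi^r-p^s\Id)$), which is not quite your ``no eigenvalue equal to a root of unity times a power of $q$.''
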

A set as in \eqref{form of the solutions} is called a \emph{$p$-set}.

Now, clearly, the set $S$ may contain infinite arithmetic progressions if $V$ contains some periodic subvariety under the action of $\Phi$ which intersects $\cO_\Phi(\alpha)$. As an aside, we note that if $X$ is a semiabelian variety (see Section~\ref{subsection semiabelian}) and $\Phi$ is a group endomorphism of $X$, then the periodic subvarieties of $V\subset X$ under the action of $\Phi$ are
necessarily translates of subgroups contained in $V$; such translates can
be classified and the maximal subgroups corresponding to them are finite in
number (for more details, see \cite{BG06, Zannier}). On the other hand, it is possible for $S$ to contain nontrivial sets of the form \eqref{form of the solutions} (where $m$ may be larger than $1$ and also where the $c_i$'s may not be integers, as shown by \cite[Examples~1.2~and~1.4]{G-TAMS}).  It is likely that all cases when the set of solutions $S$ is infinite, but it is not a finite union of arithmetic progressions occur in the presence of an algebraic group; however, proving this assertion is probably just as difficult as the original Conjecture~\ref{char p DML}.

So far, Conjecture~\ref{char p DML} is known to hold in the following two cases:
\begin{enumerate}
\item[(i)] if $X$ is a semiabelian variety defined over a finite field and $\Phi$ is an algebraic group endomorphism whose action on the tangent space at the identity is given through a diagonalizable matrix (see \cite[Proposition~13.3.0.2]{book}). Actually, \cite[Proposition~13.3.0.2]{book} was stated only in the special case $X=\bG_m^N$, but the proof carries over almost verbatim for the general case of semiabelian varieties defined over a finite field. Furthermore, in this case, the set $S$ from the conclusion of Conjecture~\ref{char p DML} consists only of finitely many arithmetic progressions, i.e., the more complicated $p$-sets from \eqref{form of the solutions} do not appear.
\item[(ii)] if $X=\bG_m^N$ and $V\subset X$ is a curve (see \cite[Theorem~1.3]{G-TAMS}). In this case, the $p$-sets from \eqref{form of the solutions} may appear, but they consist of only one nontrivial power of the prime $p$ (i.e., $m\le 2$ and $k_1=0$ with the notation as in \eqref{form of the solutions}). 
\end{enumerate}

In this paper we prove two new results towards Conjecture~\ref{char p DML} by showing it holds for any surface contained in $\bG_m^N$ and for any regular self-map $\Phi:\bG_m^N\lra \bG_m^N$ (not necessarily a group endomorphism), and also prove that it holds for any subvariety $V\subset \bG_m^N$ assuming $\Phi$ is an algebraic group endomorphism with the property that no iterate of it restricts to being a power of the Frobenius on a proper algebraic subgroup of $\bG_m^N$. 

\begin{theorem}
\label{surface result}
Let $\Phi:\bG_m^N\lra \bG_m^N$ be a regular self-map defined over an algebraically closed field $K$ of characteristic $p$, let $V\subset \bG_m^N$ be a variety of dimension at most equal to $2$, and let $\alpha\in \bG_m^N(K)$. Then the set of $n\in\N_0$ such that $\Phi^n(\alpha)\in V(K)$ is a finite union of arithmetic progressions along with finitely many sets of the form
\begin{equation}
\label{form of the solutions surface}
\left\{c_0+c_1p^{k_1n_1}+c_2p^{k_2n_2}\colon n_1,n_2\in\N_0\right\},
\end{equation}
for some $c_0,c_1,c_2\in\Q$ and some $k_1,k_2\in\N_0$.
\end{theorem}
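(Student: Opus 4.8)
The plan is to reduce everything to the case where $\Phi$ is a group endomorphism of $\bG_m^N$, and then invoke the structure theory for orbits together with the Moosa--Scanlon description of $F$-sets. First I would use the standard reduction (as in \cite{G-TAMS, book}): since $\Phi:\bG_m^N\lra\bG_m^N$ is a regular self-map, on the open orbit it behaves, up to conjugation and passing to an iterate, like a map of the form $\bfx\mapsto A\cdot\bfx^{M}$ for some monomial matrix action; more precisely one descends to a finitely generated subgroup $\Gamma\subset\bG_m^N(K)$ containing $\alpha$ and all its iterates, and replaces $\Phi$ by a semigroup of translates of a genuine group endomorphism $\psi$ of some torus containing all the $\Phi^n(\alpha)$. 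One should be careful that a regular self-map of $\bG_m^N$ need not extend to a group endomorphism — but its action on the orbit of a single point, which is what matters, is governed by such a map after the reduction. Then the set $S$ becomes $\{n\in\N_0\colon \psi^n(\beta)\in W\}$ for a subvariety $W$ of dimension $\le 2$ inside a torus, with $\beta$ a point of a finitely generated group.

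Next I would analyze the $\psi$-orbit of $\beta$. Writing the action of $\psi$ on $\Gamma/\Gamma_{\tors}\cong\Z^r$ via an integer matrix $B$, the orbit $\{\psi^n(\beta)\}$ lies in the group generated by $\beta$ and its transforms, and the condition $\psi^n(\beta)\in W$ pulls back to asking when a point moving linearly-exponentially in $n$ lands on $W$. The key input is that $W$ has dimension at most $2$: intersecting $W$ with the Zariski closure of the orbit, by the Mordell--Lang theorem of Faltings--Vojta together with the Moosa--Scanlon theorem \cite{Moosa-Scanlon}, the relevant intersection with the cyclic group $\langle$orbit$\rangle$ is a finite union of $F$-sets. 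Because $\dim W\le 2$, the translates of subgroups appearing have dimension $\le 2$, so at most two "free" Frobenius-type exponents occur; this is exactly what produces $p$-sets of the restricted shape $\{c_0+c_1p^{k_1n_1}+c_2p^{k_2n_2}\}$ in \eqref{form of the solutions surface} rather than the general form \eqref{form of the solutions}. Concretely, one must track how an $F$-set in the ambient group, when pulled back along $n\mapsto\psi^n(\beta)$, translates into a condition on $n\in\N_0$: a rank-$d$ $F$-set contributes $d$ independent $p$-power parameters, and $d\le\dim W\le 2$.

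The final step is bookkeeping: one shows that the "group part" of the intersection (translates of positive-dimensional subgroups) contributes arithmetic progressions to $S$ — this uses the classification of periodic subvarieties under a group endomorphism noted in the introduction, so only finitely many maximal such subgroups arise — while the genuinely $F$-set part contributes the sets \eqref{form of the solutions surface}, and the sporadic finite piece contributes finitely many singletons (themselves trivial arithmetic progressions). One then has to descend from $\psi$ back to $\Phi$ and from $W$ back to $V$, checking that finiteness and the shape of the parameter sets are preserved under the reduction; this is routine but must be done carefully because the reduction may have replaced $\Phi$ by an iterate, which only rescales the arithmetic progressions and the exponents $k_i$.

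The main obstacle I anticipate is the second step: controlling precisely which $F$-sets can occur and bounding their rank by $\dim W$. The Moosa--Scanlon theorem gives $F$-sets in the ambient torus, but the orbit $\{\psi^n(\beta)\}$ is a \emph{one-parameter} object, and pulling an $F$-set back through it requires understanding the Frobenius-twisted linear algebra of $B$ acting on $\Z^r$ — one needs to know that the exponents $n_j$ appearing in the pulled-back $p$-set are genuinely at most $\dim W$ in number, which forces a careful argument that the "$F$-structure" on the orbit closure is compatible with, and no richer than, the $F$-structure on $W$ itself. This is where the hypothesis $\dim(V)\le 2$ is essential and where, presumably, the difficulty of extending to higher dimension lies — hence the paper's framing that the general conjecture is equivalent to a hard characteristic-$0$ diophantine problem.
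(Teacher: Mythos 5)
Your outline reproduces the first half of the paper's argument: since every regular self-map of $\bG_m^N$ is a translation composed with a group endomorphism, one expresses $\Phi^n(\alpha)$ through integer linear recurrence sequences in coordinates relative to a $\Z$-basis of a finitely generated group $\Gamma$ containing the orbit, intersects $V$ with $\Gamma$ via Moosa--Scanlon, and uses the fact that each sum of $F$-orbits occurring in $V(K)\cap\Gamma$ has at most $\dim(V)=2$ nontrivial orbits (Remark~\ref{remark s dim V}, Corollary~\ref{precise intersection surface}). Two remarks here: the bound on the number of nontrivial orbits, which you single out as the main anticipated obstacle, is in fact the easy input, available directly from Moosa--Scanlon and Derksen--Masser; and the appeal to Faltings--Vojta is out of place, since in characteristic $p$ the relevant Mordell--Lang statement is the isotrivial one (Hrushovski, Moosa--Scanlon), which you also cite.

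The genuine gap is in what you call the bookkeeping. After the pull-back along $n\mapsto\Phi^n(\alpha)$, membership in an $F$-set $C\cdot\Gamma'$ becomes: for which $n$ do there exist $n_1,n_2\in\N_0$ with $u_n=c_1p^{k_1n_1}+c_2p^{k_2n_2}$, where $\{u_n\}$ is a linear recurrence sequence whose characteristic roots come from $\Phi$ and need have no relation to $p$ (this is the paper's Theorem~\ref{theorem reduction} together with \eqref{F-arithmetic sequence})? Nothing in the $F$-set structure, nor in the classification of periodic subvarieties, tells you that this solution set in the single variable $n$ is a finite union of arithmetic progressions and sets of the shape \eqref{form of the solutions surface}; a priori it could be essentially arbitrary. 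Establishing this is the technical heart of the paper, namely Theorem~\ref{theorem polynomial-exponential}~(B), whose proof needs genuine Diophantine approximation: Laurent's theorem when $p$ is multiplicatively independent of the characteristic roots, and otherwise Puiseux expansions of $n$ in terms of $q=c_1p^{n_1}+c_2p^{n_2}$ combined with the Corvaja--Zannier Subspace-Theorem result on $S$-unit points of analytic hypersurfaces, split into the cases $n_1<n_2/2$ and $n_1\ge n_2/2$. Your proposal never supplies (or invokes) such an ingredient, so the argument does not close. Indeed, the same ``bookkeeping'' with three or more $p$-powers is precisely what is currently intractable (Theorem~\ref{theorem reduction to polynomial-exponential equations} and \cite{CZ3}); the hypothesis $\dim(V)\le 2$ matters not because it constrains the $F$-structure of the orbit closure, but because it caps the number of $p$-powers at two, the largest number for which the resulting exponential-polynomial equation can presently be solved.
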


We remark that Theorem~\ref{surface result} in the case $V$ is a surface formally implies the conclusion of Theorem~\ref{surface result} in the case of curves; hence, it suffices to assume $V$ is an irreducible surface in the hypotheses of Theorem~\ref{surface result}.

As shown in \cite[Example~1.2]{G-TAMS} (which provides the example of a hypersurface $V\subset \bG_m^3$), there are examples of surfaces $V\subset \bG_m^N$ whose intersection with an orbit of a regular self-map of $\bG_m^N$ yields a set as in \eqref{form of the solutions surface} where $c_1,c_2,k_1,k_2$ are all nonzero.

\begin{theorem}
\label{Frobenius result}
Let $\Phi:\bG_m^N\lra \bG_m^N$ be an algebraic group endomorphism, let $V\subset \bG_m^N$ be a subvariety defined over an algebraically closed field of characteristic $p$, and let $\alpha\in \bG_m^N(K)$. Assume there is no nontrivial connected algebraic subgroup $G$ of $\bG_m^N$ such that an iterate of $\Phi$ induces an endomorphism of $G$, which equals a power of the usual Frobenius. Then the set of $n\in\N_0$ such that $\Phi^n(\alpha)\in V(K)$ is a finite union of arithmetic progressions along with finitely many sets of the form
$$\left\{\sum_{j=1}^{m} c_j p^{k_j n_j}\text{ : }n_j\in\N_0\text{ for each }j=1,\dots m\right\},$$
for some $m\in\N$, some $c_j\in\Q$, and some $k_j\in\N_0$. 
\end{theorem}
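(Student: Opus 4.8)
The plan is to reduce the dynamical statement to the Mordell-Lang theorem for semiabelian varieties over function fields (Hrushovski, together with the Moosa-Scanlon structure theorem describing the intersection of a subvariety of a torus with a finitely generated group in terms of $F$-sets), and then translate the resulting $F$-set description back into the language of arithmetic progressions and $p$-sets. First I would reduce to the case that $V$ is irreducible and that $\cO_\Phi(\alpha)$ is Zariski dense in $\bG_m^N$ (otherwise replace $\bG_m^N$ by the Zariski closure of the orbit, which is a translate of a subtorus invariant under $\Phi$, and induct on $N$). Writing $\Phi$ as $\beta\cdot\psi$ where $\psi$ is the group endomorphism and $\beta=\Phi(1)\in\bG_m^N(K)$, the iterate $\Phi^n(\alpha)$ lies in the finitely generated group $\Gamma$ generated by $\alpha,\beta$ and their images under all powers of $\psi$; more precisely $\Phi^n(\alpha)=\psi^n(\alpha)\cdot\prod_{i=0}^{n-1}\psi^i(\beta)$. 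Here is where the hypothesis enters: $\psi$ corresponds to an integer matrix $M\in M_N(\Z)$ acting on the character lattice, and the assumption that no iterate of $\psi$ restricts to a power of Frobenius on a positive-dimensional subtorus says precisely that no eigenvalue of any power of $M$ is a power of $p$ — equivalently, no eigenvalue of $M$ is a root of unity times a power of $p^{1/d}$. This is what will let us control the growth/torsion behaviour and prevent the degenerate infinite sets that are genuinely $p$-sets of complicated shape.

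Next I would apply the Mordell-Lang / Moosa-Scanlon machinery: the set $\{\Phi^n(\alpha):n\in\N_0\}\cap V(K)$ is contained in $\Gamma\cap V(K)$, which by \cite{Moosa-Scanlon} (or \cite{groups}) is a finite union of $F$-sets, i.e.\ translates of sums of sets of the form $\{F^{k n}(\gamma):n\in\N_0\}$ where $F$ denotes the relative Frobenius. Pulling this back along the parametrization $n\mapsto\Phi^n(\alpha)$ — which is itself essentially a one-parameter "linear recurrence" sequence in $\Gamma$ — the condition $\Phi^n(\alpha)\in V$ becomes a system of equations asserting that a vector-valued exponential-polynomial sequence in $n$ (built from the eigenvalues of $M$ and from $p$-power terms coming from the Frobenius structure) lands in a prescribed coset-union. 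The Frobenius hypothesis guarantees that the eigenvalue data of $M$ and the $p$-power data are "multiplicatively independent" in the relevant sense, so that the Skolem-Mahler-Lech-type analysis (in the $p$-adic / $F$-set setting of \cite{Moosa-Scanlon}, cf.\ the analysis in \cite[Theorem~1.3]{G-TAMS}) yields that $S$ is a finite union of arithmetic progressions and $p$-sets of the stated form, with no "mixing" between the additive progression part and the $p$-power part beyond what is allowed. I would handle the torsion part of $\Gamma$ separately: torsion contributes only the arithmetic-progression part (this is the case already covered by \cite[Proposition~13.3.0.2]{book}), while the free part is where $p$-sets can arise.

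The main obstacle I expect is the last step: cleanly extracting, from the $F$-set description of $\Gamma\cap V$, the precise shape \eqref{form of the solutions} for the exponent set $S$, while using the Frobenius hypothesis to rule out the pathological configurations. Concretely, one must show that the only way an $F$-set in $\Gamma$ can meet the orbit $\cO_\Phi(\alpha)$ in something other than a finite union of arithmetic progressions is through the $p$-power terms, and that these terms are governed by the (forbidden) eigenvalue-equal-to-$p$-power scenario; ruling this out requires a Frobenius-twisted version of the fact that a nondegenerate linear recurrence has only finitely many zeros, adapted to the $\Gamma\cap V$ geometry. A secondary technical point is the bookkeeping in the inductive reduction: after passing to a $\Phi$-periodic subtorus one must check that the Frobenius hypothesis is inherited, which follows because a subtorus on which some iterate acts as Frobenius would violate the hypothesis for the original $\Phi$. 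Once these are in place, combining with the semiabelian Mordell-Lang input over function fields of characteristic $p$ gives the theorem.
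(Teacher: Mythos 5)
Your outline coincides with the paper's architecture: place the orbit in a finitely generated group $\Gamma$, apply the Moosa--Scanlon theorem (Theorem~\ref{Moosa-Scanlon theorem}) to $V(K)\cap\Gamma$, translate the condition $\Phi^n(\alpha)\in C+H$ into polynomial--exponential equations $u_n=c_1p^{k_1n_1}+\cdots+c_mp^{k_mn_m}$ for integer linear recurrences $\{u_n\}$ whose characteristic roots are positive integer powers of the eigenvalues of $\Phi$ (this is Theorem~\ref{theorem reduction}, in particular part~(2)), and convert the hypothesis on $\Phi$ into multiplicative independence of those roots from $p$ (in the paper, via $G=\ker(\Phi^r-p^s\Id)$). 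The genuine gap is at the decisive Diophantine step, which you yourself flag as ``the main obstacle I expect'' and do not supply. The equation above has $m+1$ independent variables $n,n_1,\dots,n_m$, so the one-variable Skolem--Mahler--Lech theorem, and likewise the curve-case analysis of \cite{G-TAMS} (where at most one nontrivial power of $p$ occurs), do not apply; no ``Frobenius-twisted SML adapted to the geometry'' is available off the shelf. What is actually needed is Theorem~\ref{theorem polynomial-exponential}~(A): if every characteristic root of $\{u_n\}$ is multiplicatively independent of $p$, then the set of $n$ admitting such a representation is a finite union of arithmetic progressions. The paper proves this through Laurent's theorem on polynomial--exponential equations in several variables (in Schmidt's formulation \cite[Thm.~7.1]{Sch03}, resting on the Subspace Theorem): one passes to non-degenerate progressions, splits each solution into minimal vanishing subsums, and uses the multiplicative independence to show that the relation subgroup of $\Z^{m+1}$ in \eqref{eq:mult_dep} is trivial, whence finiteness on each progression. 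Without this input (or an equivalent), your argument does not close, and this step is the mathematical heart of the theorem.

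Two further points. First, the independence you need is not merely ``no eigenvalue of any power of $M$ is a power of $p$'' but the precise statement $\lambda^r=p^s\Rightarrow r=s=0$, and it must hold for the characteristic roots of the recurrences $u_n$ produced by the reduction, not just for the eigenvalues of $M$; since those roots are positive integer powers of the eigenvalues, independence is inherited, but this bookkeeping --- together with the torsion of $\Gamma$ and the congruence conditions coming from the subgroup $H$, which the paper disposes of by preperiodicity of integer recurrences modulo a fixed modulus and by refining to arithmetic progressions --- must be carried out before any Diophantine theorem can be invoked. Second, your preliminary reduction to a Zariski-dense orbit and the appeal to Hrushovski's theorem are unnecessary: the only Mordell--Lang input used in the paper is Theorem~\ref{Moosa-Scanlon theorem}, and the paper's proof never passes to the orbit closure.
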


Both Theorems~\ref{surface result} and \ref{Frobenius result} are proved by showing that in these cases, Conjecture~\ref{char p DML} reduces to some questions regarding polynomial-exponential equations, which can be solved with the known Diophantine tools. More generally, we prove that Conjecture~\ref{char p DML} in the more general case of an arbitrary group endomorphism of $\bG_m^N$ reduces to some deep Diophantine questions, whose solution is well-beyond the presently known techniques.

\begin{theorem}
\label{theorem reduction to polynomial-exponential equations}
Let $\{u_k\}$ be a  linear recurrence sequence of integers, let $m, c_1,\dots, c_m\in\N$, and let $p$ be a prime number with 
$\sum_{i=1}^m c_i < p -1$. Then there exists $N\in\N$, there exists an algebraically closed field $K$, there exists an algebraic group endomorphism $\Phi:\bG_m^N\lra \bG_m^N$, there exists $\alpha\in \bG_m^N(K)$ and there exists a subvariety $V\subset \bG_m^N(K)$ such that the set of all $n\in\N_0$ for which $\Phi^n(\alpha)\in V(K)$ is precisely the set of all $n\in\N_0$ such that 
\begin{equation}
\label{polynomial-exponential equation}
u_n=\sum_{i=1}^m c_i p^{n_i},
\end{equation}
for some $n_1,\dots, n_m\in\N_0$.
\end{theorem}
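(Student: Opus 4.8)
The plan is to produce the required data $(N,K,\Phi,\alpha,V)$ by an explicit construction with two ingredients — a companion‑matrix block recording the recurrence $\{u_n\}$, and a Frobenius block recording the set $\Sigma:=\left\{\sum_{i=1}^mc_ip^{n_i}:n_i\in\N_0\right\}$ — glued together by the subvariety $V$. Note first that the desired conclusion set is exactly $\{n\in\N_0:u_n\in\Sigma\}$, so the goal is to realise this set as a dynamical Mordell–Lang set.

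First I would linearise the recurrence. Let $d$ be its order and choose $C\in M_d(\Z)$ built from the companion matrix of a monic integer linear recurrence satisfied by $\{u_k\}$, so that the state vectors $\mathbf u_k:=(u_k,\dots,u_{k+d-1})^{\mathrm T}$ obey $\mathbf u_{k+1}=C\mathbf u_k$. Working over $K:=\overline{\F_p(t)}$, let $\Psi_C\colon\bG_m^d\to\bG_m^d$ be the group endomorphism $\Psi_C(\mathbf x)_i=\prod_j x_j^{C_{ij}}$ and put $\beta:=(t^{u_0},\dots,t^{u_{d-1}})\in\bG_m^d(K)$. Since $\Psi_C$ sends $(t^{a_1},\dots,t^{a_d})$ to $(t^{(C\mathbf a)_1},\dots,t^{(C\mathbf a)_d})$, we get $\Psi_C^n(\beta)=(t^{u_n},\dots,t^{u_{n+d-1}})$; in particular the first coordinate of $\Psi_C^n(\beta)$ is $t^{u_n}$, which is all this block contributes.

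The heart of the proof is the Frobenius block, realising the $p$‑set $\Sigma$. The characteristic‑$p$ input is that $E(X):=\sum_{k\ge0}X^{p^k}$ is algebraic over $\F_p(X)$, via the Artin–Schreier relation $E^p-E+X=0$, hence so is $\Pi(X):=\prod_{i=1}^m E(X^{c_i})=\prod_{i=1}^m\bigl(\sum_{k\ge0}X^{c_ip^k}\bigr)$; moreover the support of $\Pi$ is exactly $\Sigma$. It is here that the hypothesis $\sum_ic_i<p-1$ enters: it forces every representation $k=\sum_ic_ip^{n_i}$ to be carry‑free in base $p$, so the corresponding multiplicity is a unit modulo $p$ and the coefficient of $X^k$ in $\Pi$ is nonzero in $\F_p$ exactly when $k\in\Sigma$. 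Following the pattern of the examples in \cite{G-TAMS}, I would then build over $K$ an auxiliary torus $\bG_m^M$, a Frobenius‑type group endomorphism $\Psi_0$, a base point $\gamma$, and a subvariety $W$ — using coordinates carrying Frobenius orbits of $t$ together with coordinates built from $1-t$ and its translates, on which the identities $(1-t)^{p^k}=1-t^{p^k}$ and the functional equation for $E$ become available — arranged so that lying on $W$, read against the coordinate $t^{u_n}$, expresses precisely the condition $u_n\in\Sigma$.

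Finally I would set $N:=d+M$, $\Phi:=\Psi_C\times\Psi_0$ on $\bG_m^d\times\bG_m^M$, $\alpha:=(\beta,\gamma)$, and take $V\subset\bG_m^N$ to be cut out by the equations of $W$ together with the single equation identifying the first coordinate of the $\bG_m^d$‑factor with the ``$\Sigma$‑reading'' coordinate of the $\bG_m^M$‑factor; then $\{n\in\N_0:\Phi^n(\alpha)\in V(K)\}$ should be precisely $\{n:u_n\in\Sigma\}$, as required. The step I expect to be the main obstacle is the verification that this last encoding is faithful in both directions — in particular, that no ``parasitic'' integer $n$ makes the orbit meet $V$ for a reason other than a genuine equality $u_n=\sum_ic_ip^{n_i}$ — and this is exactly where the hypothesis $\sum_ic_i<p-1$ is indispensable, since it is what turns the correspondence between $k\in\Sigma$ and its base‑$p$ digit pattern into a bijection rather than merely a surjection.
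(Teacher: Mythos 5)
Your first block (the companion-matrix monomial map producing $t^{u_n}$ in one coordinate) is fine and is essentially a concrete instance of the coding used in the paper via Proposition~\ref{encode lrs} and Proposition~\ref{prop:lrs plus ML}. The genuine gap is the second block: the entire content of the paper's Proposition~\ref{prop:p-set as ML} --- producing a torus, a point and a subvariety whose Mordell--Lang return set is \emph{exactly} $\Sigma=\{\sum_i c_ip^{n_i}\}$ --- is asserted but never constructed, and you yourself flag the faithfulness verification as the ``main obstacle.'' That verification is not a routine check; it is the heart of the theorem. In the paper it is done by applying $[u_n]$ to the rich point $P=(t+1,\dots,t+p-1)\in\G_m^{p-1}$, cutting $X$ by the linear forms coming from the inverse Vandermonde matrix, comparing coefficients of powers of $t$, using the congruence $m\equiv\ell\pmod{p-1}$ (this is where $\sum_i c_i<p-1$ enters), Gau{\ss}'s observation on binomial coefficients mod $p$ to exclude parasitic $m$, and a sub-parametrization trick to pass from $c_1=\dots=c_\ell=1$ to general $c_i$. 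Your Artin--Schreier series heuristic (nonvanishing mod $p$ of the multiplicities in $\prod_i E(X^{c_i})$) is itself an unproved claim, and in any case it is never linked to an actual variety $W$ and a two-way argument.

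Moreover, the gluing architecture you propose is structurally flawed. The condition to be realized is ``there exist $n_1,\dots,n_m$ (independent of $n$) with $u_n=\sum_i c_ip^{n_i}$,'' and this existential quantifier must be absorbed by a positive-dimensional subvariety evaluated at the orbit point, as in the paper where $X$ is the image of the parametrization $\pi(y_1,\dots,y_\ell)$ and the $y_j$ end up being $t^{p^{n_j}}$. It cannot come from an auxiliary dynamical ``Frobenius block'': at time $n$ the point $\Psi_0^n(\gamma)$ has coordinates whose exponents are tied to $n$ (of the shape $p^{kn}$), so imposing ``$\Psi_0^n(\gamma)\in W$ and $t^{u_n}=(\Psi_0^n(\gamma))_j$'' yields conditions like $u_n=cp^{kn}$, not membership of $u_n$ in $\Sigma$. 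Nor can a fixed closed condition on the single coordinate $t^{u_n}$ (together with constants) do the job: the locus it cuts in that $\G_m$-coordinate is Zariski closed, hence finite or everything, whereas $\{t^s: s\in\Sigma\}$ is neither. This is precisely why the paper feeds all the translates $(t+a)^{u_n}$, $a=1,\dots,p-1$, into the subvariety rather than $t^{u_n}$ alone.
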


The polynomial-exponential equation \eqref{polynomial-exponential equation} is very deep and despite an extensive research being done on this problem, the case of a general linear recurrence sequence $\{u_n\}$ coupled with $m>2$ (i.e., at least three powers of the prime $p$ in \eqref{polynomial-exponential equation}) is  beyond the known methods (for more details, see \cite{CZ1, CZ3}). We also note that applying a similar construction as in the proof of Theorem~\ref{theorem reduction to polynomial-exponential equations}, one can establish a similar result in the context of arbitrary semiabelian varieties $X$ defined over a finite field; in this case,  the right hand-side of \eqref{polynomial-exponential equation} is replaced by some linear combination of powers of the roots of the minimal polynomial in $\Z[x]$ which kills the Frobenius acting on $X$ (in the case  $X=\bG_m^N$, the minimal polynomial in $\Z[x]$ killing the Frobenius is simply $x-p$; for more details, see Section~\ref{subsection semiabelian}, especially equation \eqref{general equation endomorphism}).

Finally, we note that since already the case of semiabelian varieties is very difficult, it is hard to predict how one can approach Conjecture~\ref{char p DML} for other ambient quasiprojective varieties $X$; also, many of the $p$-adic tools used in proving special cases of the Dynamical Mordell-Lang Conjecture in characteristic $0$ (see \cite[Chapter~6]{book}) do not work in positive characteristic.

We discuss now the strategy for proving our results. We have two main ingredients for the proof of Theorems~\ref{surface result}~and~\ref{Frobenius result}: on one hand, we have Theorem~\ref{Moosa-Scanlon theorem} (see also the more precise statement from Corollary~\ref{precise intersection surface} in the case of intersections with surfaces), and on the other hand, we have various  Diophantine results regarding solutions to polynomials-exponential equations (see Theorem~\ref{theorem polynomial-exponential}).  With the notation as in Theorems~\ref{surface result}~and~\ref{Frobenius result}, there exists a finitely generated group $\Gamma$ such that the orbit of $\alpha$ under $\Phi$ is contained inside $\Gamma$, and so, we obtain $V(K)\cap\cO_\Phi(\alpha)$ by intersecting first $V$ with $\Gamma$ and then with $\cO_{\Phi}(\alpha)$. As we will prove in  Theorem~\ref{theorem reduction} (see also Theorem~\ref{theorem reduction to polynomial-exponential equations} for the converse statement), solving Conjecture~\ref{char p DML} for regular self-maps on $\bG_m^N$ reduces to solving polynomial-exponential equations of the form \eqref{polynomial-exponential equation}. If $V$ is a surface, we use Corollary~\ref{precise intersection surface} to reduce to the case there are at most $2$ powers of $p$ in the right-hand side from \eqref{polynomial-exponential equation}; this allows us to completely solve the case of surfaces contained in a torus. Similarly, the hypothesis from Theorem~\ref{Frobenius result} on the endomorphism $\Phi$ of $\bG_m^N$ reduces the polynomial-exponential equation from \eqref{polynomial-exponential equation} to a Diophantine question, which we know how to answer. Apart from these two cases, the polynomial-exponential equations of type \eqref{polynomial-exponential equation} are beyond the reach of the known Diophantine methods. 

We sketch below the plan of our paper. In Section~\ref{subsection F-sets} we discuss the classical Mordell-Lang problem for semiabelian varieties in characteristic $p$, by stating the results of Moosa-Scanlon \cite{Moosa-Scanlon}  and then  explaining their connections to Conjecture~\ref{char p DML} (we also note the papers \cite{Derksen, Derksen-Masser}, which treat related problems).  In Section~\ref{subsection arithmetic sequences} we also discuss some properties of the $p$-sets, i.e., sets of the form \eqref{form of the solutions}. Then we proceed in Section~\ref{section reduction} to prove that Conjecture~\ref{char p DML} for regular self-maps of semiabelian varieties $X$ defined over a finite field reduces to a polynomial-exponential question (see Theorem~\ref{theorem reduction}). We stress out that our result applies to \emph{all} semiabelian varieties $X$; in particular, this would allow future researchers to use Theorem~\ref{theorem reduction} for solving additional cases of Conjecture~\ref{char p DML} for semiabelian varieties $X$ once there will be new progress to solving the polynomial-exponential equations corresponding to Conjecture~\ref{char p DML}.  In Section~\ref{section all polynomial-exponential equations}, we prove that in the case $X=\bG_m^N$, then we actually recover a \emph{large class} of possible polynomial-exponential equations (this is Theorem~\ref{theorem reduction to polynomial-exponential equations}); so, the results from Section~\ref{section all polynomial-exponential equations} may be viewed as complementary to the results from Section~\ref{section reduction}. In Section~\ref{section diophantine}, we establish the key Diophantine statement for our proofs of Theorems~\ref{surface result}~and~\ref{Frobenius result}.  We conclude our paper by proving Theorems~\ref{surface result}~and~\ref{Frobenius result} in Section~\ref{section proofs}.


\section{The Mordell-Lang problem in positive characteristic}
\label{section M-L p}


\subsection{Semiabelian varieties}
\label{subsection semiabelian}

From now on, $p$ is a prime number, $K$ is an algebraically closed field of characteristic $p$, and $X$ is a semiabelian variety defined over $K$, i.e., there exists a short exact sequence of algebraic groups defined over $K$: 
$$1\lra T\lra X\lra A\lra 1,$$
where $A$ is an abelian variety and $T$ is an algebraic torus (possibly trivial). Each regular self-map $\Phi$ of $X$ is a composition of a translation with an (algebraic group) endomorphism $\Phi_0$. Indeed, after composing $\Phi$ with a translation, we may assume $\Phi$ sends the identity of $X$ into itself. In particular, this means that the induced map $\varphi:=\Phi|_T$ is a regular self-map on the torus $T$, which maps the identity into itself; hence $\varphi$ is an endomorphism of $T$. Furthermore, the induced map $\bar{\Phi}:A\lra A$ also maps the identity into itself and so, it must be a group endomorphism as well (see \cite[Corollary~1.2]{Milne}). Since there is no non-constant regular map between an abelian variety and a torus, we derive that $\Phi$ must be a group endomorphism, as claimed.

Each endomorphism $\Phi_0:X\lra X$ is integral over $\Z$ (where $\Z$ is seen as a subring of the ring $\End(X)$ of endomorphisms of $X$), i.e., there exists $\ell\in\N$ and there exist integers $c_0,\dots, c_{\ell-1}$ such that
\begin{equation}
\label{general equation endomorphism}
\Phi_0^\ell + c_{\ell-1}\Phi_0^{\ell-1}+\cdots + c_1\Phi_0+c_0\cdot \Id = 0,
\end{equation}
where the above identity endomorphism $\Id$ is seen inside $\End(X)$. Actually, $\ell\le 2\dim(X)$, but this fact will not be necessary in our proof.

Assume $X$ is defined over a finite subfield $\Fq$ of $K$, and let $F$ be the corresponding Frobenius map for the finite field $\Fq$; we extend $F$ to an endomorphism of $X$. Then also $F$ satisfies an equation of the form \eqref{general equation endomorphism}; furthermore, as a consequence of Weil conjectures for abelian varieties over finite fields, we note that the roots of the corresponding minimal equation satisfied by the endomorphism $F$ are distinct and each one has absolute value equal to $q^{a_i}$ with $a_i\in\left\{\frac{1}{2},1\right\}$. Actually, we know more precise information about the rational numbers $a_i$ (i.e., there is at most one $a_i=1$ and if some $a_i=1$ then the corresponding root of \eqref{general equation endomorphism} equals $q$ and the torus $T$ is nontrivial, while there are $2\cdot \dim(A)$ roots of absolute value $q^{\frac{1}{2}}$, according to Weil conjectures \cite[Chapter~2]{Milne}); however, the only relevant information for us will be that the roots for the minimal equation satisfied by $F$ are distinct and their absolute values are of the form $q^{a_i}$ for some positive $a_i\in\Q$.   


\subsection{The intersection of a subvariety of a semiabelian variety defined over a finite field with a finitely generated subgroup}
\label{subsection F-sets}

We continue with the notation from Section~\ref{subsection semiabelian} for $K$ and $X$. We assume $X$ is defined over a finite subfield $\Fq\subset K$, and we let $V$ be an arbitrary subvariety of $X$ defined over $K$.  In order to state Theorem~\ref{Moosa-Scanlon theorem} (which is crucial for all our proofs), we need first to introduce the notion of \emph{$F$-sets} defined by Moosa-Scanlon \cite{Moosa-Scanlon}. The Frobenius $F$ acting on $X$ is the endomorphism induced by the usual field homomorphism given by $x\mapsto x^q$ for each $x\in K$.


\begin{definition} 
\label{definition F-sets}
With the above notation for $X$, $q$, $K$ and $F$, let $\Gamma\subseteq X(K)$ be a finitely generated subgroup.
\begin{itemize}
\item[(a)]
By a \emph{sum of $F$-orbits} in $\Gamma$ we mean a set of the form
$$C(\alpha_1,\dots,\alpha_m;k_1,\dots,k_m):=\left\{\sum_{j=1}^m F^{k_jn_j}(\alpha_j) \colon n_j\in\mathbb{N}_0\right\}\subseteq\Gamma$$
where $\alpha_1,\dots,\alpha_m$ are some given points in $X(K)$ and $k_1,\dots,k_m$ are some given non-negative integers.
\item[(b)]
An \emph{$F$-set} in $\Gamma$ is a set of the form 
$C+ \Gamma '$ where $C$ is a sum of $F$-orbits in $\Gamma$, and $\Gamma '\subseteq \Gamma$ is a subgroup, while in general, for two sets $A,B\subset X(K)$, then $A+ B$ is simply the set $\{a+ b\colon a\in A\text{,  }b\in B\}$. 
\end{itemize}
\end{definition}

As a matter of convention, when we work with $F$-orbits inside an algebraic torus $\bG_m^N$ (corresponding to the usual Frobenius map $x\mapsto x^p$), then we call \emph{product} of $F$-orbits, rather than sum of $F$-orbits; also, in this case we use the notation $\prod_{j=1}^m \alpha_j^{p^{k_jn_j}}$ for an element in the corresponding product of $F$-orbits (as in part~(a) of Definition~\ref{definition F-sets}).

We note that allowing for the possibility that some $k_i=0$ in the definition of a sum of $F$-orbits, implicitly we allow a singleton be a sum of $F$-orbits; this also explains why we do not need to consider cosets of subgroups $\Gamma '$ in the definition of an $F$-set. Furthermore, if $k_2=\cdots = k_m=0$ then the corresponding sum of $F$-orbits is simply a translate of the single $F$-orbit $C(\alpha_1;k_1)$. Note that the $F$-orbit $C(\alpha;k)$ for $k>0$ is finite if and only if $\alpha\in X\left(\overline{\mathbb{F}_p}\right)$. Finally, we call an orbit $C(\alpha;k)$ nontrivial, if it is infinite (i.e., $k>0$ and $\alpha\notin X\left(\overline{\mathbb{F}_p}\right)$).

We also note that since we allow the base points $a_i$ be outside $\Gamma$, we can use the slightly simpler definition of groupless $F$-sets involving sums of $F$-orbits rather than using the $F$-cycles (see \cite[Remark~2.6]{Moosa-Scanlon}, and also the slight extension proven in \cite{groups}).

\begin{theorem}[Moosa-Scanlon \cite{Moosa-Scanlon}]
\label{Moosa-Scanlon theorem}
Let $X$ be a semiabelian variety defined over $\Fq$, let $\Fq\subset K$ be an algebraically closed field, let $V\subset X$ be a subvariety defined over $K$ and let $\Gamma\subset X(K)$ be a finitely generated subgroup. Then $V(K)\cap \Gamma$ is a finite union of $F$-sets contained in $\Gamma$.
\end{theorem}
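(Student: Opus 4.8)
The statement to prove is the Moosa--Scanlon Theorem (Theorem~\ref{Moosa-Scanlon theorem}): for a semiabelian variety $X/\Fq$, an algebraically closed field $\Fq\subset K$, a subvariety $V\subset X$ over $K$, and a finitely generated subgroup $\Gamma\subset X(K)$, the set $V(K)\cap\Gamma$ is a finite union of $F$-sets contained in $\Gamma$.

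\medskip

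The plan is as follows. First I would reduce to the case where $\Gamma$ is finitely generated as a \emph{module} over the ring $\Z[F]\subseteq\End(X)$ (equivalently over $\Z[F^k]$ for any fixed $k$), by replacing $\Gamma$ with the $\Z[F]$-module it generates; this is still a finitely generated abelian group because $F$ satisfies an integral equation over $\Z$ as in \eqref{general equation endomorphism}, so $\Z[F]$ is a finitely generated $\Z$-module. Enlarging $\Gamma$ only makes the statement stronger provided we check the $F$-set structure is preserved under the relevant operations. Next, the heart of the matter is the model-theoretic input: after passing to the language of separably closed fields of finite degree of imperfection (or, in the purely algebraic approach, using the theory of $F$-sets intrinsically), one shows that the trace of $V$ on $\Gamma$ is a \emph{definable} subset of $\Gamma$ in a suitable structure, and then one invokes a quantifier-elimination / stability-theoretic dichotomy (the ``Mordell--Lang'' mechanism of Hrushovski and its refinements) to conclude that $V(K)\cap\Gamma$ is built from cosets of subgroups and Frobenius orbits.

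\medskip

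Concretely, the key steps in order: (1) Reduce to $X$ simple or to $X=\bG_m$ and $X$ an abelian variety by d\'evissage along the exact sequence $1\to T\to X\to A\to 1$ --- intersections with the torus part and the abelian part can be handled separately and $F$-sets behave well under extensions, using that an $F$-set in a group is closed under the operations of taking preimages and images under isogenies and quotients. (2) For each simple factor, distinguish the \emph{isotrivial} case (where $\End^\circ$ contains Frobenius-like elements and genuine $F$-orbits appear) from the non-isotrivial case (where by Hrushovski's theorem $V(K)\cap\Gamma$ is already a finite union of cosets of subgroups, i.e.\ an $F$-set with all $k_i=0$). (3) In the isotrivial case, use the structure of $\Gamma$ as a module over $\Z[F]$: after choosing generators, membership $\Phi$-orbit... rather, membership in $V$ of an element $\sum_j F^{a_j}(\gamma_j)$ translates, via the Frobenius being the $q$-power map, into a system of equations whose solution set in the exponents $(a_j)$ is itself describable by a combinatorial ``$p$-automaton'' / linear-algebra argument over $\Fq$, yielding exactly the $F$-set shape. (4) Finally, assemble: a finite union of $F$-sets in the pieces pulls back to a finite union of $F$-sets in $X$, and one checks the result lands inside the \emph{original} $\Gamma$ (not just the enlarged one) by intersecting, again using closure properties of $F$-sets under intersection with a subgroup.

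\medskip

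The main obstacle is step (3) together with the closure properties needed in steps (1) and (4): showing that the class of finite unions of $F$-sets is closed under intersection, under pullback along isogenies, and under the operations used in the d\'evissage. This is genuinely delicate --- it is essentially the combinatorial core of the Moosa--Scanlon paper, where one must control how $F$-orbits interact with subgroups and with each other under addition and intersection, and it is the place where the ``$F$-cycle'' formalism is really needed in general (our Definition~\ref{definition F-sets} uses the simpler ``sums of $F$-orbits'' only because we allow base points outside $\Gamma$). Since this theorem is quoted from \cite{Moosa-Scanlon} and used as a black box in the present paper, I would in practice cite it rather than reprove it; the sketch above indicates how a self-contained proof would proceed, with the understanding that the hard combinatorial bookkeeping of the closure properties is where all the work lies.
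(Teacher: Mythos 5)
The paper gives no proof of this statement at all: it is imported directly from Moosa--Scanlon \cite{Moosa-Scanlon} and used as a black box, so your bottom-line decision to cite it is exactly the paper's approach. Your sketch of how the cited proof goes is a reasonable outline of the Moosa--Scanlon strategy (the only slip being that, since $X$ is defined over $\Fq$, every simple factor is isotrivial and the non-isotrivial/Hrushovski branch of your step (2) never actually arises), but there is no in-paper argument to compare it against.
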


\begin{remark}
\label{remark s dim V}
One can deduce following the argument from \cite{Moosa-Scanlon} that each sum of $F$-orbits appearing in the interrsection of $V$ with the group $\Gamma$ is a sum of at most $\dim(V)$ nontrivial orbits; this follows easily from the proof of \cite[Theorem~7.8]{Moosa-Scanlon} using induction on the dimension of $V$ (see also \cite[Corollary~7.3]{Moosa-Scanlon}). Also, in the case $X$ is an algebraic torus, Derksen-Masser \cite[Theorem~1]{Derksen-Masser} (see also the paper by Masser \cite{Masser} on a similar topic) proved that there are at most $\dim(V)$ nontrivial orbits under the Frobenius in any product of $F$-orbits appearing in the intersection of $V\subset \bG_m^N$ with $\Gamma$. 

We state below a special case of these results which we will employ in our proof of Theorem~\ref{surface result}; the next corollary can be deduced also directly using a similar argument as in the proof of \cite[Corollary~2.3]{G-TAMS}.
\end{remark}

\begin{corollary}
\label{precise intersection surface}
With the notation from Theorem~\ref{Moosa-Scanlon theorem}, if $X=\bG_m^N$ and  $V$ is a surface, then the intersection $V(K)\cap\Gamma$ is a finite union of $F$-sets $C\cdot \Gamma '$, where $C$ is a product of  orbits under the Frobenius action on $\bG_m^N$ of which at most $2$ orbits are nontrivial, while $\Gamma '$ is a subgroup of $\Gamma$.
\end{corollary}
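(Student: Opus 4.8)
The plan is to deduce this from Theorem~\ref{Moosa-Scanlon theorem} together with the bound noted in Remark~\ref{remark s dim V}, specializing to $X=\bG_m^N$ and $\dim(V)=2$; alternatively, one can give a self-contained argument mimicking \cite[Corollary~2.3]{G-TAMS}. I would proceed as follows. By Theorem~\ref{Moosa-Scanlon theorem}, $V(K)\cap\Gamma$ is a finite union of $F$-sets $C\cdot\Gamma'$, with each $C$ a product of $F$-orbits $C(\beta_1,\dots,\beta_s;k_1,\dots,k_s)$ and $\Gamma'\subseteq\Gamma$ a subgroup. Only the \emph{nontrivial} orbits matter for the count (those with $k_i>0$ and $\beta_i\notin\bG_m^N(\overline{\F_p})$), since a trivial orbit contributes only finitely many points and can be absorbed into the finite union by replacing $C$ with the finitely many cosets $\gamma\cdot C'$ it produces. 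So it suffices to show each product $C$ can be taken with at most $2$ nontrivial orbits.

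The main step is the reduction $s\le\dim(V)=2$. For this I would follow the inductive structure of the proof of \cite[Theorem~7.8]{Moosa-Scanlon} (or invoke \cite[Theorem~1]{Derksen-Masser} directly in the torus case, which is exactly this statement): if an $F$-set $C(\beta_1,\dots,\beta_s;k_1,\dots,k_s)\cdot\Gamma'$ with $s$ nontrivial orbits is contained in $V(K)$, then letting $n_s\to\infty$ (say) forces the Zariski closure of the ``tail'' $\{\prod_{j<s}\beta_j^{p^{k_jn_j}}\cdot\beta_s^{p^{k_sn_s}}\cdot\gamma'\}$ to lie in $V$; a dimension/limit argument shows that each additional nontrivial orbit cuts down to a proper subvariety, so after at most $\dim(V)$ such orbits one lands in a point (or a lower-dimensional piece already accounted for), forcing $s\le\dim(V)$. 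Since $\dim(V)=2$ here, we get $s\le 2$, which is the claim. The same argument simultaneously shows the $\Gamma'$ summand can coexist with at most $\dim(V)-(\text{number of nontrivial orbits})$ ``free'' directions, but we only need the orbit count.

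The part requiring the most care is organizing the induction so that the ``trivial orbit'' bookkeeping does not inflate the orbit count: one must check that absorbing a finite orbit into $\Gamma'$-cosets, or pushing a base point $\beta_i\in\bG_m^N(\overline{\F_p})$ through the Frobenius, genuinely reduces to finitely many translates of products with strictly fewer nontrivial orbits, and that this process terminates. This is routine but is where the ``finitely many'' in the conclusion is actually generated. Once that is in place, the statement follows: $V(K)\cap\Gamma$ is a finite union of sets $C\cdot\Gamma'$ with $C$ a product of Frobenius orbits, at most $2$ of them nontrivial, and $\Gamma'\le\Gamma$. I expect the main obstacle to be purely expository—tracking the bookkeeping of trivial versus nontrivial orbits through the induction—rather than any new mathematical input, since the key bound is already contained in \cite{Moosa-Scanlon} and \cite{Derksen-Masser}.
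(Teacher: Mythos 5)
Your proposal matches the paper's own treatment: the corollary is deduced exactly as you describe, by combining Theorem~\ref{Moosa-Scanlon theorem} with the bound of at most $\dim(V)$ nontrivial Frobenius orbits recorded in Remark~\ref{remark s dim V} (via the proof of \cite[Theorem~7.8]{Moosa-Scanlon}, or \cite[Theorem~1]{Derksen-Masser} in the torus case), specialized to $\dim(V)=2$, with the alternative direct argument along the lines of \cite[Corollary~2.3]{G-TAMS} also noted there. The paper offers no further detail beyond these citations, so your plan is both correct and essentially identical in route.
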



\subsection{Arithmetic sequences}
\label{subsection arithmetic sequences}

In this section (which overlaps with \cite[Section~3]{G-TAMS}) we state various useful results regarding linear recurrence sequences and $p$-sets.

A \emph{linear recurrence sequence} is a sequence $\{u_n\}_{n\in\N_0}\subset \C$ with the property that there exists $m\in\N$ and there exist $c_0,\dots, c_{m-1}\in\C$ such that for each $n\in\N_0$ we have
\begin{equation}
\label{equation definition linear recurrence sequence}
u_{n+m}+c_{m-1}u_{n+m-1}+\cdots + c_1u_{n+1}+c_0u_n=0.
\end{equation}

If $c_0=0$ in \eqref{equation definition linear recurrence sequence}, then we may replace $m$  by $m-k$ where $k$ is the smallest positive integer such that $c_k\ne 0$; then the sequence $\{u_n\}_{n\in\N_0}$ satisfies the linear recurrence relation
\begin{equation}
\label{equation definition linear recurrence sequence 2}
u_{n+m-k}+c_{m-1}u_{n+m-1-k}+\cdots + c_ku_{n}=0,
\end{equation}
for all $n\ge k$. In particular, if $k=m$, then the sequence $\{u_n\}_{n\in\N_0}$ is eventually constant, i.e., $u_n=0$ for all $n\ge m$. 

Assume now that $c_0\ne 0$ (which may be achieved at the expense of re-writing the recurrence relation as in \eqref{equation definition linear recurrence sequence 2}); then there exists a closed form for expressing $u_n$ for all $n$ (or at least for all $n$ sufficiently large if one needs to re-write the recurrence relation as in \eqref{equation definition linear recurrence sequence 2}). The characteristic roots of a linear recurrence sequence as in \eqref{equation definition linear recurrence sequence} are the solutions of the equation
\begin{equation}
\label{characteristic equation}
x^m+c_{m-1}x^{m-1}+\cdots +c_1x+c_0=0.
\end{equation}
We let $r_i$ (for $1\le i\le s$) be the (nonzero) roots of the equation \eqref{characteristic equation}; then there exist polynomials $P_i(x)\in\C[x]$ such that for all  $n\in\N_0$, we have
\begin{equation}
\label{general formula linear recurrence sequence}
u_n=\sum_{i=1}^s P_i(n)r_i^n.
\end{equation}
In general, as explained above, for an arbitrary linear recurrence sequence, the formula \eqref{general formula linear recurrence sequence} holds for all $n$ sufficiently large (more precisely, for all $n\ge m$ with the notation from \eqref{equation definition linear recurrence sequence}); for more details on linear recurrence sequences, we refer the reader to the chapter on linear recurrence sequences written by Schmidt \cite{Sch03}.

It will be convenient for us later on in our proof to consider linear recurrence sequences which are given by a formula such as the one in \eqref{general formula linear recurrence sequence} (for $n$ sufficiently large) for which the following two properties hold:
\begin{enumerate}
\item[(i)] if some $r_i$ is a root of unity, then $r_i=1$; and  
\item[(ii)] if $i\ne j$, then $r_i/r_j$ is not a root of unity.
\end{enumerate}   
Such linear recurrence sequences given by formula \eqref{general formula linear recurrence sequence} and satisfying properties~(i)-(ii) above are called \emph{non-degenerate}.   Given an arbitrary linear recurrence sequence, we can always split it into finitely many linear recurrence sequences which are all non-degenerate; moreover, we can achieve this by considering instead of one sequence $\{u_n\}$, finitely many sequences which are all of the form $\{u_{nM+\ell}\}$ for a given $M\in\N$ and for $\ell=0,\dots, M-1$. Indeed, assume some $r_i$ or some $r_i/r_j$ is a root of unity, say of order $M$; then for each $\ell=0,\dots, M-1$ we have that 
\begin{equation}
\label{re-writing u n}
u_{nM+\ell}=\sum_{i=1}^s P_i(nM+\ell)r_i^{\ell} (r_i^M)^n
\end{equation}
and moreover, we can re-write the formula \eqref{re-writing u n} for $u_{nM+\ell}$ by collecting the powers $r_i^M$ which are equal and thus achieve a non-degenerate linear recurrence sequence $v_n:=u_{nM+\ell}$. 

The following famous theorem of Skolem \cite{Skolem} (later generalized by Mahler \cite{Mahler} and Lech \cite{Lech})  will be used throughout our proof.
\begin{proposition}
\label{Skolem result}
Let $\{u_n\}_{n\in\N_0}\subset \C$ be a linear recurrence sequence, and let $c\in\C$. Then the set $T$ of all $n\in\N_0$ such that $u_n=c$ is a finite union of arithmetic progressions; moreover, if $\{u_n\}$ is a non-degenerate linear recurrence sequence, then the set $T$ is infinite only if the sequence $\{u_n\}$ is eventually constant. 
\end{proposition}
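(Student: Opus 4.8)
This is the classical Skolem--Mahler--Lech theorem, so in the paper one simply cites it; here is the strategy I would follow. \emph{Reduction to $c=0$.} The constant sequence $c$ is itself a linear recurrence sequence (with characteristic root $1$ when $c\neq 0$), and linear recurrence sequences are closed under sums, so $\{u_n-c\}$ is a linear recurrence sequence whose zero set is exactly $T$. Moreover, if $\{u_n\}$ is non-degenerate then so is $\{u_n-c\}$: subtracting $c$ can only introduce the characteristic root $1$, and for any old root $r_i$ the ratios $r_i/1=r_i$ and $1/r_i$ are roots of unity precisely when $r_i$ is, which by condition~(i) forces $r_i=1$ already. Hence it suffices to prove that the zero set of a linear recurrence sequence is a finite union of arithmetic progressions, and that for a non-degenerate sequence an infinite zero set forces eventual vanishing.

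\emph{The $p$-adic core.} After reducing (as the excerpt indicates) to the case $c_0\neq 0$, write $u_n=e_1^{\mathrm T}A^n w$, where $A\in\GL_m$ is the companion matrix of the recurrence and $w=(u_0,\dots,u_{m-1})^{\mathrm T}$. All entries lie in a finitely generated subfield of $\C$; by the standard reduction (specialize to a number field, then pick a prime $p$ of good reduction) we may assume this field embeds into $\Q_p$ with $A\in\GL_m(\Z_p)$. Since $\GL_m(\mathbb{F}_p)$ is finite, some power satisfies $A^M=I+pB$ with $B\in M_m(\Z_p)$, and then for each residue $\ell\in\{0,\dots,M-1\}$ the binomial series $(A^M)^z=\sum_{k\geq 0}\binom{z}{k}(pB)^k$ converges for all $z\in\Z_p$, so $z\mapsto e_1^{\mathrm T}A^\ell(A^M)^z w$ is a $p$-adic analytic function $f_\ell$ on $\Z_p$ with $f_\ell(n)=u_{Mn+\ell}$. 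By Strassmann's theorem each $f_\ell$ either vanishes identically --- contributing the full arithmetic progression $n\equiv\ell\pmod M$ to $T$ --- or has finitely many zeros, contributing only finitely many singletons. Taking the union over $\ell$, $T$ is a finite union of arithmetic progressions.

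\emph{The non-degenerate refinement.} Suppose $\{u_n\}$ is non-degenerate and $T$ is infinite. By the above, $T\supseteq\{an+b:n\in\N_0\}$ for some $a\geq 1$, so $u_{an+b}=c$ for all $n$; substituting into \eqref{general formula linear recurrence sequence} and regrouping as in \eqref{re-writing u n}, $u_{an+b}=\sum_{i=1}^s\bigl(r_i^b P_i(an+b)\bigr)(r_i^a)^n$. Non-degeneracy makes $i\mapsto r_i^a$ injective (otherwise $r_i/r_j$ would be a root of unity), so these are pairwise distinct nonzero bases and the representation of a sequence in such bases is unique; comparing with the constant sequence $c\cdot 1^n$, and using that $r_i^a=1\Rightarrow r_i=1$ by condition~(i), we get $P_i\equiv 0$ for every index with $r_i\neq 1$, $P_i\equiv c$ for the unique index (if any) with $r_i=1$, and $c=0$ if no $r_i$ equals $1$ (a polynomial vanishing on an infinite set vanishes identically). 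Feeding this back into \eqref{general formula linear recurrence sequence} gives $u_n=c$ for all sufficiently large $n$, i.e.\ $\{u_n\}$ is eventually constant.

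\emph{Main obstacle.} The single non-elementary ingredient is the $p$-adic step --- choosing a prime for which the companion matrix is integral and invertible, interpolating the recurrence by a $p$-adic analytic function, and bounding its zeros via Strassmann's theorem --- which is precisely the Skolem--Mahler--Lech theorem itself; so in practice one only cites it. Everything else (the reduction to $c=0$ and the deduction of the non-degenerate refinement from the main statement) is purely formal manipulation with generalized power sums.
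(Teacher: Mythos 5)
Your proposal is correct: the paper does not prove this proposition at all but cites it as the classical Skolem--Mahler--Lech theorem, and your sketch is precisely the standard proof of that cited result (reduction to $c=0$, Lech's specialization of the finitely generated coefficient field into $\Q_p$, $p$-adic analytic interpolation along residue classes plus Strassmann's theorem), together with the routine uniqueness-of-generalized-power-sum argument that derives the non-degenerate refinement from the main statement. Nothing essential is missing, so citing the theorem, as you and the paper both do, is the appropriate course.
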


Next we consider the $p$-sets, which are sets of non-negative integers of the form \eqref{form of the solutions}, i.e., 
\begin{equation}
\label{p-sets again 2}
\left\{\sum_{j=1}^m c_j p^{k_jn_j}\colon n_j\in\N_0\right\} 
\end{equation}
for some $m\in\N$, some $c_j\in\Q$ and some $k_j\in\N_0$.  Since for each positive integer $M$, the powers of $p$ are preperiodic modulo $M$, we immediately obtain the following result.

\begin{proposition}
\label{prop:p-arithmetic and arithmetic}
Let $p$ be a prime number. 
The intersection of an arithmetic progression with a $p$-set is a finite union of $p$-sets. 
\end{proposition}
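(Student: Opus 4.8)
The plan is to intersect at the level of exponent tuples. Write the arithmetic progression as $A=\{Mk+\ell\colon k\in\N_0\}$; if $M=0$ then $A$ is a single point and $A\cap E$ is either empty or that point, hence a finite union of $p$-sets, so assume $M\geq 1$. Write the given $p$-set as $E=\{\sum_{j=1}^{m}c_jp^{k_jn_j}\colon n_j\in\N_0\}\subseteq\N_0$. First I would put $E$ into a normal form: discard every term with $c_j=0$; observe that, since $E\subseteq\N_0$, any surviving term with $k_j\geq 1$ must have $c_j>0$ (otherwise, fixing the other $n_i$ and letting $n_j\to\infty$ drives the value to $-\infty$, contradicting $E\subseteq\N_0$); and merge all terms with $k_j=0$ into a single rational constant $\gamma$. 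Thus, apart from the trivial case where no nonconstant term survives, we may assume $E=\{\gamma+\sum_{j=1}^{m'}c_jp^{k_jn_j}\colon n_j\in\N_0\}$ with each $c_j>0$ and each $k_j\geq 1$.

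Next I would decompose the index set $\N_0^{m'}$ into finitely many boxes adapted to $A$. Pick $D\in\N$ with $D\gamma\in\Z$ and $Dc_j\in\Z$ for all $j$. For each $j$ the sequence $n\mapsto p^{k_jn}\bmod DM$ is eventually periodic, say with preperiod $a_j$ and period $b_j$. Choose also a threshold $T\in\N$ large enough that, whenever some $n_j>T$, one already has $\gamma+\sum_i c_ip^{k_in_i}>\ell$; this is possible precisely because every $c_j>0$, so the value grows without bound in each variable, uniformly over which coordinate is large. Now partition $\N_0$ (the range of each $n_j$) into the singletons $\{0\},\dots,\{N_j\}$, where $N_j:=\max(a_j,T)$, together with the $b_j$ residue classes of $\{n>N_j\}$ modulo $b_j$; each of the latter is an honest arithmetic progression $\{n_j^{(0)}+b_js\colon s\in\N_0\}$ on which $p^{k_jn}\bmod DM$ is constant. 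Taking the product of these partitions over $j=1,\dots,m'$ yields finitely many boxes $B=\prod_jC_j\subseteq\N_0^{m'}$.

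On a fixed box $B$ each residue $p^{k_jn_j}\bmod DM$ is constant, so $D\,v$ is constant modulo $DM$ for $v:=\gamma+\sum_jc_jp^{k_jn_j}$; since every such $v$ is an integer, $v\bmod M$ is then a constant $\rho_B$ on $B$. Split the boxes in two kinds. If every $C_j$ is a singleton, $B$ is a single tuple contributing one value, and there are only finitely many such boxes; retain those values lying in $A$. If some $C_j$ is an arithmetic progression, then by the choice of $T$ every value on $B$ exceeds $\ell$, so a value $v$ on $B$ lies in $A$ if and only if $v\equiv\ell\pmod M$, i.e. if and only if $\rho_B\equiv\ell\pmod M$ --- a condition depending only on $B$. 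For such a qualifying box, the image $\{\gamma+\sum_jc_jp^{k_jn_j}\colon(n_j)\in B\}$ is itself a $p$-set: a singleton coordinate $C_j=\{t\}$ freezes the $j$-th term to the constant $c_jp^{k_jt}$, while an arithmetic-progression coordinate $C_j=\{n_j^{(0)}+b_js\}$ rewrites the $j$-th term as $(c_jp^{k_jn_j^{(0)}})\cdot p^{(k_jb_j)s}$, so the image has the shape required in the definition of a $p$-set and, being contained in $E$, lies in $\N_0$. The union of the finitely many retained singletons with the images of the qualifying boxes is exactly $A\cap E$, a finite union of $p$-sets.

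The one point that is not purely formal is the inequality $v\geq\ell$ concealed in membership in $A$: an arithmetic progression need not contain the small elements of its residue class, so congruence information alone does not decide membership. This is precisely where the standing convention that a $p$-set is a set of \emph{non-negative integers} is used --- it forces every nonconstant coefficient to be positive, hence the values to grow, and so confines the exceptional tuples to the finitely many singleton boxes. Everything else is routine bookkeeping with the eventual periodicity of $\{p^n\bmod DM\}$, which is presumably why the statement is announced as immediate.
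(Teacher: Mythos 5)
Your proof is correct and rests on exactly the same observation the paper uses -- that the powers of $p$ are (pre)periodic modulo the relevant modulus -- the paper simply states this in one line and leaves the bookkeeping (normalizing the coefficients, splitting the exponent tuples into boxes, and the lower-bound issue for membership in the progression) implicit, whereas you carry it out in full. No discrepancy in substance; your write-up is just a detailed version of the paper's immediate argument.
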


Also, as a simple application of Laurent's theorem \cite{Laurent} regarding the intersection of subvarieties of an algebraic torus $T$ with finitely generated subgroups of $T$, we obtain the following result.

\begin{proposition}
\label{prop:intersections of p-arithmetic sequences}
The intersection of two $p$-sets is a finite union of $p$-sets. More precisely, if each of the two $p$-sets that we intersect consists of sums of at most $m$ powers of $p$ (see \eqref{p-sets again 2}), then each $p$-set from their intersection also consists of sums of at most $m$ powers of $p$ as in \eqref{p-sets again 2}.
\end{proposition}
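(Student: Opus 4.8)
The plan is to convert $A\cap B$ into the image of an intersection of a subvariety of a torus with a finitely generated group, and then apply Laurent's theorem. Writing $A=\big\{\sum_{j=1}^{a}c_jp^{k_jn_j}\colon n_j\in\N_0\big\}$ and $B=\big\{\sum_{i=1}^{b}d_ip^{\ell_iu_i}\colon u_i\in\N_0\big\}$ with $a,b\le m$, I would introduce $\bG_m^{a+b}$ with coordinates $z_1,\dots,z_a,w_1,\dots,w_b$, the linear section $W:=\{\sum_jc_jz_j=\sum_id_iw_i\}\subset\bG_m^{a+b}$, the finitely generated subgroup $\Gamma:=\langle p^{k_1}\rangle\times\cdots\times\langle p^{k_a}\rangle\times\langle p^{\ell_1}\rangle\times\cdots\times\langle p^{\ell_b}\rangle\subset\bG_m^{a+b}(\Q)$ (with the convention $\langle p^{0}\rangle=\{1\}$), and the submonoid $\Gamma^{+}\subset\Gamma$ of tuples all of whose entries are non-negative powers of $p$. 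A tuple in $W\cap\Gamma^{+}$ is precisely a common representation of one rational number both as an element of $A$ and as an element of $B$, so $A\cap B=\pi(W\cap\Gamma^{+})$, where $\pi$ is the restriction of the linear map $(z_j,w_i)\mapsto\sum_jc_jz_j$. By Laurent's theorem \cite{Laurent}, $W\cap\Gamma$ is a finite union $\bigcup_t\gamma_t(H_t\cap\Gamma)$, where $\gamma_1H_1,\dots,\gamma_TH_T$ are the finitely many maximal cosets of algebraic subtori contained in $W$, together with finitely many sporadic points (contributing finitely many singleton $p$-sets); one may take $\gamma_t\in\Gamma$. So it suffices to prove that each $\pi(\gamma_tH_t\cap\Gamma^{+})$ is a single $p$-set that is a sum of at most $m$ powers of $p$.

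Then I would analyse one maximal coset $\gamma H\subset W$ using the standard combinatorics of vanishing subsums (the mechanism behind the $S$-unit equation theorem). The coset determines a partition of $\{1,\dots,a+b\}$ into blocks, on each of which the corresponding subsum of $\sum_jc_jz_j-\sum_id_iw_i$ vanishes identically and — by maximality — has no proper vanishing subsum; the coset is then the product over the blocks of one-dimensional ``diagonal'' cosets, so $H\cong\bG_m^{\ell}$ with coordinates $\theta_1,\dots,\theta_\ell$ (one per block), each ambient coordinate equals a fixed power of $p$ times $\theta_{\alpha}$ for $\alpha$ the block of that index, and the characters cut out on $H$ are exactly the $\Z$-linearly independent $\theta_1,\dots,\theta_\ell$. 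Restricting $\pi$ to $\gamma H$ thus gives $\pi=\sum_{\alpha}\Psi_\alpha\theta_\alpha$ with $\Psi_\alpha\in\Q$ the subsum of $c_j(\gamma)_j$ over the $z$-indices in block $\alpha$ (here $(\gamma)_j$ is the $z_j$-coordinate of $\gamma$); the block-vanishing forces $\Psi_\alpha=0$ unless block $\alpha$ contains a $z$-index, so at most $a\le m$ (and symmetrically at most $b\le m$) of the $\Psi_\alpha$ are nonzero. Finally, a point of $\gamma H$ lies in $\Gamma^{+}$ exactly when each $\theta_\alpha$ is a non-negative power of $p$ satisfying the divisibility conditions imposed by the $k_j,\ell_i$ occurring in its block; as the blocks are disjoint these decouple, so each $\theta_\alpha$ ranges independently over a single arithmetic-progression set $\{p^{r_\alpha+N_\alpha t}\colon t\in\N_0\}$. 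Substituting, $\pi(\gamma H\cap\Gamma^{+})=\{\sum_\alpha\Psi'_\alpha p^{N_\alpha t_\alpha}\colon t_\alpha\in\N_0\}$ for suitable $\Psi'_\alpha\in\Q$ and $N_\alpha\in\N_0$, which is a $p$-set that is a sum of at most $\min(a,b)\le m$ powers of $p$. Summing over the finitely many cosets gives the proposition, with the asserted bound on the number of powers.

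I expect the one non-routine ingredient to be the structural fact used above: for a maximal coset the block characters $\theta_1,\dots,\theta_\ell$ are $\Z$-linearly independent, equivalently the number of blocks equals $\dim H$. (The quick reason: if some block admitted a proper vanishing subsum then, since the whole block-sum already vanishes, the complementary subsum would vanish too, and one could split the block and enlarge the coset — a contradiction; so every block is a minimal vanishing subsum, hence one-dimensional and ``diagonal''.) This is exactly what rules out ``diagonal'' images such as $\{p^{2s}+p^{3s}\colon s\in\N_0\}$, which is infinite but not a finite union of $p$-sets: such images do occur as $\pi$ of non-maximal cosets of $W$, but each such coset is contained in a maximal one whose $\pi$-image is a bona fide $p$-set absorbing it, which is why one must use the sharp form of Laurent's theorem (decomposition into maximal torsion cosets) rather than an arbitrary covering by cosets. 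The rest — disposing of the finitely many sporadic points and, if desired, using Proposition~\ref{prop:p-arithmetic and arithmetic} to handle the congruences — is routine.
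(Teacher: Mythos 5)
Your overall route is exactly the one the paper has in mind: the paper offers no argument at all beyond calling the proposition ``a simple application of Laurent's theorem'' \cite{Laurent}, and your encoding of $A\cap B$ as $\pi(W\cap \Gamma^{+})$ for the hypersurface $W=\{\sum_j c_j z_j=\sum_i d_i w_i\}\subset \bG_m^{a+b}$, followed by the per-block analysis showing that each coset contributes a $p$-set with at most $\min(a,b)\le m$ nonzero terms, is precisely that application, with the correct count for the ``more precisely'' clause.

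Two points in your write-up need tightening, though neither requires a new idea. First, your citation of Laurent's theorem is misstated: a hypersurface such as $W$ may contain \emph{infinitely} many maximal cosets (for $z_1+z_2=w_1$ in $\bG_m^3$, every translate $\gamma\Delta$ with $\gamma\in W$ of the diagonal $\Delta=\{(s,s,s)\}$ is a maximal coset inside $W$), so the intersection is not governed by ``the finitely many maximal cosets contained in $W$''; what the theorem gives is finitely many cosets $\gamma_t H_t\subseteq W$, with $\gamma_t\in\Gamma$, such that $W\cap\Gamma=\bigcup_t \gamma_t(H_t\cap\Gamma)$. Second, your parenthetical ``quick reason'' for the key structural fact only shows that the blocks of a maximal coset admit no proper vanishing subsum, which is not the statement you actually use (that the block characters form a basis, i.e.\ that the $\theta_\alpha$ range freely). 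Both issues are cured at once by replacing each Laurent coset $\gamma_t H_t$ by its diagonal hull: partition the indices by declaring $t\sim t'$ when the coordinates $x_t,x_{t'}$ restrict to the same character of $H_t$; linear independence of distinct characters forces each block's weighted subsum $\sum_{t\in B}a_t(\gamma_t)_t$ to vanish, hence the full diagonal subtorus $H_t'$ of this partition satisfies $H_t\subseteq H_t'$ and $\gamma_t H_t'\subseteq W$. These finitely many diagonal cosets still cover $W\cap\Gamma^{+}$, their block characters are a basis by construction, and your decoupling of the exponent congruences and the bound on the number of powers of $p$ then go through verbatim; in particular, maximality of the cosets is never needed.
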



\section{Reduction of the dynamical problem to a polynomial-exponential equation}
\label{section reduction}

Our goal is to prove that solving Conjecture~\ref{char p DML} reduces to solving several polynomial-exponential equations (see Theorem~\ref{theorem reduction}). In order to state our result, we introduce some notation.

In this section, we fix some finite field $\Fq$ of characteristic $p$; also, we let $X$ be a semiabelian variety defined over $\Fq$. We let $F$ be the Frobenius endomorphism of $X$ induced by the field automorphism $x\mapsto x^q$. We let  $P_{{\rm min},F}\in\Z[x]$ be the minimal polynomial for the Frobenius (as an endomorphism of $X$); then $P_{{\rm min},F}$ has simple roots: $\lambda_1,\dots,  \lambda_r$. So, 
\begin{equation}
\label{min P F}
P_{{\rm min},F}\text{ is a monic polynomial of degree $r$ and $P_{{\rm min},F}(F)=0$}.
\end{equation}
Let $\{U_n\}_{n\in\N_0}$ be a linear recurrence sequence. Let $m\in\N_0$, let $\{U^{(i)}_{n}\}_{n\in\N_0}\subset \Z$ (for $i=1,\dots, m$) be linear recurrence sequences, each one of them having simple characteristic roots, all of them being of the form $\lambda_j^{b_{i}}$ for some $b_{i}\in\N_0$ (where the $\lambda_j$'s are the minimal roots of the minimal polynomial $P_{{\rm min},F}\in\Z[x]$ of the Frobenius endomorphism of $X$). In particular, if $b_i=0$ then $U^{(i)}$ is a constant sequence because it is a linear recurrence sequence which has only one characteristic root, which is equal to $1$. So, if $U^{(i)}$ is a constant sequence (i.e., if $b_i=0$), then we say that $U^{(i)}$ is a trivial sequence, while if $U^{(i)}$ is non-constant (i.e., if $b_i\in \N$), then we say that $U^{(i)}$ is a nontrivial sequence.

Then, with the above notation for the linear recurrence sequences $U$ and $U^{(i)}$, given also some $a,b\in\N_0$, we call an \emph{$F$-arithmetic sequence}, the set of all $n$ of the form $ak+b$ (for some $k\in\N_0$) for which there exist $n_1,\dots, n_m\in\N_0$ such that 
\begin{equation}
\label{F-arithmetic sequence}
U_n=U^{(1)}_{n_1}+\cdots +U^{(m)}_{n_m}.
\end{equation}

\begin{remarks}
\label{rem:F-arithmetic}
The following observations will be used repeatedly in the proof of Theorem~\ref{theorem reduction}.
\begin{itemize}
\item[(i)] If $m=0$, then Equation~\ref{F-arithmetic sequence} is void and in this case, the $F$-arithmetic sequence is simply the arithmetic progression $\{ak+b\}_{k\in\N_0}$.

\item[(ii)] On the other hand, if $a=0$ (regardless of $m\in\N_0$), then the corresponding $F$-arithmetic sequence contains at most one element $\{b\}$.

\item[(iii)] In general, we could combine the two conditions defining the $F$-arithmetic sequence into one condition:
\begin{equation}
\label{F-arithmetic sequence a k b}
U_{ak+b}=U^{(1)}_{n_1}+\cdots +U^{(m)}_{n_m}.
\end{equation}
However, we prefer to work with Equation~\ref{F-arithmetic sequence} instead and tacitly assume that the set of solutions $n$ must also belong to an arithmetic progression. Note that the sequence $\{U_{ak+b}\}_{k\in\N_0}$ is itself a linear recurrence sequence whose characteristic roots are the $a$-th powers of the characteristic roots of the linear recurrence sequence $U$.
\end{itemize}
\end{remarks}

The intersection of finitely many $F$-arithmetic sequences is called a \emph{general $F$-arithmetic sequence}. 

\begin{theorem}
\label{theorem reduction}
With the above notation for $p$, $q$, $X$ and $F$, let $\Phi:X\lra X$ be a regular self-map defined over an algebraically closed field $K$ containing $\Fq$, let $\alpha\in X(K)$ and let $V\subset X$ be a subvariety defined over $K$. Then the set of all $n\in \N_0$ such that $\Phi^n(\alpha)\in V(K)$ is a union of finitely many general $F$-arithmetic sequences.  

Furthermore, the following more precise statements hold:
\begin{enumerate}
\item[(1)] The number of nontrivial linear recurrence sequences $U^{(i)}$ appearing in the definition of each $F$-arithmetic sequence from the conclusion of Theorem~\ref{theorem reduction} (see \eqref{F-arithmetic sequence}) is at most equal to $\dim(V)$.
\item[(2)] If $\Phi$ is a group endomorphism, then the characteristic roots of the linear recurrence sequences $\{U_n\}$ appearing in the left-hand side of each equation \eqref{F-arithmetic sequence} defining any of the above general $F$-arithmetic sequences are positive integer powers of the roots of the minimal polynomial in $\Z[x]$ satisfied by $\Phi$ (see equation \eqref{general equation endomorphism}). 
\end{enumerate}
\end{theorem}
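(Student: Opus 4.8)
The plan is to reduce the dynamical question to a statement about the finitely generated group generated by the orbit of $\alpha$, and then apply the Moosa--Scanlon structure theorem (Theorem~\ref{Moosa-Scanlon theorem}). First I would use the analysis of Section~\ref{subsection semiabelian}: since $\Phi$ is a regular self-map of the semiabelian variety $X$, we may write $\Phi = \tau_\beta \circ \Phi_0$ where $\Phi_0$ is a group endomorphism and $\tau_\beta$ is translation by a point $\beta \in X(K)$. Expanding the iterates gives $\Phi^n(\alpha) = \Phi_0^n(\alpha) + \sum_{j=0}^{n-1}\Phi_0^j(\beta)$. Because $\Phi_0$ satisfies an integral equation \eqref{general equation endomorphism} over $\Z$, both the sequence $\Phi_0^n(\alpha)$ and the partial sums $\sum_{j<n}\Phi_0^j(\beta)$ lie in the finitely generated group $\Gamma$ generated by $\alpha$, $\beta$, and their images under the finitely many endomorphisms $\Phi_0, \dots, \Phi_0^{\ell-1}$ (here one uses that $\sum_{j<n}\Phi_0^j(\beta)$ itself satisfies a linear recurrence with integer coefficients, obtained from the minimal polynomial of $\Phi_0$ together with the extra factor $(x-1)$). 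Thus $\cO_\Phi(\alpha) \subseteq \Gamma$, and more importantly, coordinatizing $\Gamma$ by a basis, the $n$-th term $\Phi^n(\alpha)$ is recorded by a tuple of integer linear recurrence sequences in $n$ whose characteristic roots are (powers of) the roots of the minimal polynomial of $\Phi_0$ — this is where statement~(2) will come from, since when $\Phi$ itself is a group endomorphism we may take $\beta = 0$ and no extra root $1$ is introduced beyond what is already allowed.

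Next I would intersect. By Theorem~\ref{Moosa-Scanlon theorem}, $V(K) \cap \Gamma$ is a finite union of $F$-sets $C(\alpha_1,\dots,\alpha_m;k_1,\dots,k_m) + \Gamma'$. So $\Phi^n(\alpha) \in V(K)$ if and only if the coordinate tuple of $\Phi^n(\alpha)$ lies in one of these $F$-sets. Membership in a coset of the subgroup $\Gamma'$, after choosing coordinates adapted to $\Gamma'$, translates into: certain integer linear combinations of the recurrence-sequence coordinates of $\Phi^n(\alpha)$ vanish, while the remaining coordinates equal prescribed $\Z$-linear combinations of $F^{k_j n_j}(\alpha_j)$ — and each $F^{k_j n_j}(\alpha_j)$, again read in the basis of $\Gamma$, is a tuple of integer linear recurrence sequences in the variable $n_j$ whose characteristic roots are powers $\lambda_t^{b}$ of the roots $\lambda_t$ of $P_{{\rm min},F}$. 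Collecting coordinates, the condition "$\Phi^n(\alpha)$ lies in this particular $F$-set" becomes a finite system of equations: some of the form $U_n = 0$ (which by Skolem--Mahler--Lech, Proposition~\ref{Skolem result}, cut out a finite union of arithmetic progressions in $n$, providing the "$ak+b$" part of an $F$-arithmetic sequence, and possibly forcing some of the $n_j$ to specialize), and the rest of the form $U_n = U^{(1)}_{n_1} + \cdots + U^{(m)}_{n_m}$ as in \eqref{F-arithmetic sequence}. Taking the conjunction over all coordinates and then the union over the finitely many $F$-sets in $V(K)\cap\Gamma$ exhibits $S$ as a finite union of general $F$-arithmetic sequences.

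For statement~(1), I would invoke Remark~\ref{remark s dim V}: in the Moosa--Scanlon decomposition of $V(K)\cap\Gamma$, each sum of $F$-orbits involves at most $\dim(V)$ \emph{nontrivial} orbits $C(\alpha_j;k_j)$ (those with $k_j>0$ and $\alpha_j \notin X(\overline{\F_p})$). The trivial orbits contribute only constants, which get absorbed into the shape of $U^{(i)}$ with $b_i = 0$, i.e. into trivial sequences; so the number of nontrivial $U^{(i)}$ in \eqref{F-arithmetic sequence} is bounded by the number of nontrivial orbits, hence by $\dim(V)$. One technical point to handle carefully is that a single nontrivial orbit $C(\alpha_j;k_j)$, when its base point $\alpha_j$ is written in the basis of $\Gamma$, may contribute to several coordinates at once — but it still contributes a \emph{single} free parameter $n_j$, so the count of nontrivial sequences (equivalently, of independent Frobenius parameters) is preserved.

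**The main obstacle.** The hard part will be the bookkeeping of the reduction from "membership in an $F$-set $C + \Gamma'$" to a clean system of equations of the precise form \eqref{F-arithmetic sequence}, respecting the constraints that the $U^{(i)}$ have \emph{simple} characteristic roots that are powers $\lambda_j^{b_i}$ of the Frobenius roots, and that the left-hand sides $U_n$ are genuine linear recurrence sequences (not merely eventually given by an exponential-polynomial formula). Passing to the quotient $\Gamma/\Gamma'$ and choosing compatible bases, splitting into non-degenerate pieces by passing to arithmetic subprogressions in each of $n, n_1, \dots, n_m$ (as in \eqref{re-writing u n}), and checking that after all these reductions one is still looking at a finite union of general $F$-arithmetic sequences — this is where the argument requires care, even though each individual step is routine. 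The torsion phenomena (when some $\alpha_j$ or a coordinate of $\Phi_0^n(\alpha)$ lands in $X(\overline{\F_p})$) also need to be tracked, since those collapse nontrivial-looking orbits to finite sets and feed back into the Skolem--Mahler--Lech step rather than into \eqref{F-arithmetic sequence}.
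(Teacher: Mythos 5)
Your proposal follows essentially the same route as the paper's proof: expand $\Phi^n(\alpha)$ via integer linear recurrence sequences coming from the integral equation satisfied by $\Phi_0$, place everything in a finitely generated group and apply Moosa--Scanlon, coordinatize $\Gamma$ modulo its torsion, and translate membership in each $F$-set $C+\Gamma'$ into finitely many equations of the form \eqref{F-arithmetic sequence}, with statement~(1) coming from the bound of at most $\dim(V)$ nontrivial $F$-orbits and statement~(2) from tracking characteristic roots in the case $\beta=0$. The bookkeeping you flag (torsion elements and the congruence conditions from a non-saturated $\Gamma'$) is resolved in the paper exactly as you anticipate, by restricting $n,n_1,\dots,n_m$ to suitable arithmetic progressions using that integer linear recurrence sequences are eventually periodic modulo any fixed modulus.
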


\begin{remark}
Since a (generalized) $F$-arithmetic sequence may be a singleton (see Remark~\ref{rem:F-arithmetic}~(ii)), our conclusion in Theorem~\ref{theorem reduction} allows for finite sets to appear as the return set of $n\in\N_0$ such that $\Phi^n(\alpha)\in V(K)$. Similarly, as noted in Remark~\ref{rem:F-arithmetic}~(i), we also allow the possibility of an arithmetic progression to appear as the return set in Theorem~\ref{theorem reduction}.
\end{remark}

\begin{remark}
\label{rem:more_precise}
Because we want to establish the more precise statement~(2) from the conclusion of Theorem~\ref{theorem reduction}, we prefer to use Equation~\ref{F-arithmetic sequence} rather than Equation~\ref{F-arithmetic sequence a k b} when working with an $F$-arithmetic sequence (see also Remark~\ref{rem:F-arithmetic}~(iii)).
\end{remark}

\begin{proof}[Proof of Theorem~\ref{theorem reduction}.]
Since $X$ is a semiabelian variety, then there exists a group endomorphism $\Phi_0:X\lra X$ and there exists $y\in X(K)$ such that for any $\gamma\in X$, we have that $\Phi(\gamma)=\Phi_0(\gamma)+y$. Also, as noted in \eqref{general equation endomorphism}, there exists $\ell\in\N$ such that $\Phi_0$ satisfies (in ${\rm End}(X)$) a monic equation of degree $\ell$ with integer coefficients. Then, as shown in \cite[Claim~4.2]{G-TAMS} (whose proof for algebraic tori extends verbatim for any semiabelian variety), there exist linear recurrence sequences $\{u^{(i)}_{n}\}_{n\in\N_0}\subset \Z$ for $1\le i\le \ell$ and $\{v_{n}^{(i)}\}_{n\in\N_0}\subset \Z$ for $0\le i\le \ell-1$ such that for each $n\in\N_0$, we have
\begin{equation}
\label{iterate n Phi}
\Phi^n(\alpha)=\sum_{i=1}^\ell u^{(i)}_{n}\left(\sum_{j=0}^{i-1} \Phi_0^j(y)\right) + \sum_{i=0}^{\ell-1} v^{(i)}_{n}\Phi_0^i(\alpha).
\end{equation}
Furthermore, the characteristic roots of the linear recurrence sequences $v^{(i)}$ are among the roots of the minimal polynomial $P_{{\rm min},\Phi_0}\in\Z[x]$ of $\Phi_0$, while the characteristic roots of $u^{(i)}$ are contained in the set consisting of $1$ and all of the roots of $P_{{\rm min},\Phi_0}$ (see  \cite[Equations~(14)~and~(17)]{G-TAMS}). 

We also use the following notation
\begin{equation}
\label{iterate n Phi 2}
Q_i:=\sum_{j=0}^{i-1}\Phi_0^j(y)\text{ for each $i=1,\dots, \ell$.}
\end{equation}

We let $\Gamma$ be a finitely generated group containing $\Phi_0^j(y)$ and $\Phi_0^j(\alpha)$ for each $j=0,\dots, \ell-1$. Theorem~\ref{Moosa-Scanlon theorem} yields that 
\begin{equation}
\label{first description intersection}
V(K)\cap\Gamma\text{ is a finite union of $F$-sets $C+H$ contained in $\Gamma$,}
\end{equation}
where $C$ is a sum of $F$-orbits (actually, we have at most $\dim(V)$ nontrivial orbits appearing in $C$, as observed in Remark~\ref{remark s dim V}---see also Corollary~\ref{precise intersection surface}) and $H$ is a subgroup of $\Gamma$. Thus we write 
\begin{equation}
\label{sum F orbit base points}
C=\left\{\sum_{j=1}^{m} F^{n_jk_j}(\gamma_j)\colon n_j\in\N_0\right\},
\end{equation}
for some $m\in\N_0$, some $\gamma_j\in X(K)$ and some $k_j\in\N_0$, and moreover, there are at most $\dim(V)$ nonzero $k_i$'s.  

At the expense of replacing $\Gamma$ by a larger (but still finitely generated) group, we may assume $\Gamma$ contains each $F^i(\gamma_j)$ for $j=1,\dots, m$ and $i=0,\dots, r-1$. In particular, we get that $F^i(\gamma_j)\in \Gamma$ for all $i\ge 0$ (see \eqref{min P F}).

Since $\Gamma$ is a finitely generated abelian group, we know that it is isomorphic to a direct sum of a finite subgroup $\Gamma_0$ with a subgroup $\Gamma_1$ which is isomorphic to $\Z^s$ for some $s\in\N_0$. We let $\{P_1,\dots, P_s\}$ be a $\Z$-basis for $\Gamma_1$. 

We proceed with the notation from equations \eqref{iterate n Phi} and  \eqref{iterate n Phi 2} and so, 
$$\Phi^n(\alpha)=\sum_{i=1}^\ell u^{(i)}_{n}Q_i + \sum_{i=0}^{\ell-1}v^{(i)}_{n}\Phi_0^i(\alpha).$$  
Then we write each $Q_i$ (for $i=1,\dots, \ell$) as $Q_{i,0}+\sum_{j=1}^s b_{i,j}P_j$ with $Q_{i,0}\in \Gamma_0$ and each $b_{i,j}\in\Z$, and also write each $\Phi_0^i(\alpha)$ (for $i=0,\dots, \ell-1$) as $T_{i,0}+\sum_{j=1}^s c_{i,j}P_j$ with $T_{i,0}\in \Gamma_0$ and each $c_{i,j}\in\Z$.  

We also write for each $j=1,\dots, m$  and for each $i=0,\dots, r-1$ (note that $r$ is the degree of the minimal polynomial $P_{{\rm min},F}$ with integer coefficients satisfied by the Frobenius endomorphism in $\End(X)$; see equation \eqref{min P F})  
$$F^i(\gamma_j)=R^{(i)}_{j,0}+\sum_{k=1}^s d_{i,j,k}P_k\text{ with }R^{(i)}_{j,0}\in \Gamma_0\text{ and each }d_{i,j,k}\in\Z.$$
We will write the conditions satisfied by $n\in\N_0$ in order for $\Phi^n(\alpha)\in (C + H)$ (see \eqref{first description intersection}), or equivalently that there exist some $n_1,\dots, n_m\in\N_0$ such that 
$$\Phi^n(\alpha) - \sum_{j=1}^m F^{k_jn_j}(\gamma_j) \in H.$$  
In order to do this, we observe that, similar to deducing the formula \eqref{iterate n Phi}, we have that there exist linear recurrence sequences $\{a^{(i)}_n\}_{n\in\N_0}\subset \Z$ for $i=0,\dots, r-1$ such that
\begin{equation}
\label{iterate n F}
F^n(\gamma_j)=\sum_{i=0}^{r-1} a^{(i)}_n F^i(\gamma_j).
\end{equation}
Furthermore, for each $i=0,\dots, r-1$, the characteristic equation satisfied by the linear recurrence sequence $\{a^{(i)}_n\}_{n\in\N_0}$ is $P_{{\rm min},F}(x)=0$. Indeed, this follows from the fact that $P_{{\rm min},F}(F)=0$ and then expressing reccursively $F^n$ as linear combinations of $F^i$ (for $i=0,\dots, r-1$) with integer coefficients $a^{(i)}_n$ and then observing that each sequence $\{a^{(i)}_n\}_{n\in\N_0}$ satisfies the linear recurrence given by the characteristic equation $P_{{\rm min},F}(x)=0$. 

We compute then
\begin{eqnarray*}
& & \Phi^n(\alpha)-\sum_{i=1}^m F^{k_in_i}(\gamma_i)\\
& = & \left( \sum_{i=1}^\ell u^{(i)}_{n}Q_{i,0} + \sum_{i=0}^{\ell-1} v^{(i)}_{n}T_{i,0} - \sum_{j=0}^{r-1}\sum_{i=1}^m a^{(j)}_{k_in_i} R_{i,0}^{(j)} \right)\\
& + & \sum_{k=1}^s \left( \sum_{i=1}^\ell b_{i,k}u^{(i)}_{n} + \sum_{i=0}^{\ell-1} c_{i,k} v^{(i)}_{n} - \sum_{j=0}^{r-1} \sum_{i=1}^m d_{j,i,k}a^{(j)}_{k_in_i}\right) \cdot P_k.
\end{eqnarray*}
Since linear combinations of linear recurrence sequences are again linear recurrence sequences (whose characteristic roots are among the characteristic roots of the original linear recurrence sequences), we conclude that there exist linear recurrence sequences $\{U^{(k)}_n\}_{n\in\N_0}$ for $k=1,\dots, s$ and $\{A^{(i,k)}_n\}_{n\in\N_0}$ for $i=1,\dots, m$ and $k=1,\dots, s$ such that
\begin{eqnarray*}
& & \Phi^n(\alpha)-\sum_{i=1}^m F^{k_in_i}(\gamma_j)\\
& = & \left( \sum_{i=1}^\ell u^{(i)}_{n}Q_{i,0} + \sum_{i=0}^{\ell-1} v^{(i)}_{n}T_{i,0} - \sum_{j=0}^{r-1}\sum_{i=1}^m a^{(j)}_{k_in_i} R_{i,0}^{(j)} \right)\\
& + & \sum_{k=1}^s \left(U^{(k)}_n - \sum_{i=1}^m A^{(i,k)}_{n_i}\right) P_k.
\end{eqnarray*}
More precisely, $U^{(k)}_n:= \sum_{i=1}^\ell b_{i,k}u^{(i)}_{n} + \sum_{i=0}^{\ell-1} c_{i,k} v^{(i)}_{n}$. In particular, if $\Phi=\Phi_0$ (i.e., if $y=0$) then $U^{(k)}_n=\sum_{i=0}^{\ell-1} c_{i,k}v^{(i)}_n$ and so, the characteristic roots of each $U^{(k)}$ is among the characteristic roots of the $v^{(i)}$'s, i.e., the characteristic roots of the $U^{(k)}$'s are among the roots of $P_{{\rm min},\Phi}$. Also, for each $i=1,\dots, m$, each $k=1,\dots, s$ and for each $n\in\N_0$, we have that $$A^{(i,k)}_n:=\sum_{j=0}^{r-1}d_{j,i,k}a^{(j)}_{k_in}.$$  
In particular, if $k_i=0$ (i.e., the corresponding $F$-orbit $C(\gamma_i;k_i)$ is trivial), then $A^{(i,k)}$ is a trivial linear recurrence sequence (i.e., it is constant). Furthermore, the  characteristic roots for each linear recurrence sequence $A^{(i,k)}$ are $k_i$-th powers of the eigenvalues $\lambda_j$ of the Frobenius $F$. 

Now, since each $Q_{i,0}$, $T_{i,0}$ and $R^{(j)}_{i,0}$ is a torsion point (of order bounded by the cardinality $M$ of the torsion subgroup of $\Gamma$) and moreover, any linear recurrence sequence of integers is preperiodic modulo $M$, then at the expense of replacing each $n$ and also each $n_i$ by suitable arithmetic progressions (which, in particular, leads to replacing $U^{(k)}$ and $A^{(i,k)}$ by other linear recurrence sequences whose characteristic roots are powers of the characteristic roots for the original linear recurrence sequences), we may assume there exists some torsion point $S_0\in\Gamma$ such that
\begin{eqnarray*}
& & \Phi^n(\alpha)-\sum_{j=1}^m F^{k_jn_j}(\gamma_j)\\
& = & S_0 + \sum_{k=1}^s \left(U^{(k)}_n - \sum_{i=1}^m A^{(i,k)}_{n_i}\right) P_k.
\end{eqnarray*}
Replacing $n$ by an arithmetic progression $\{an+b\}_{n\in\N_0}$ (for some $a\in\N$ and $b\in\N_0$) simply changes $\Phi$ by $\Phi^a$ and also replaces $\alpha$ by $\Phi^b(\alpha)$. Also, the conclusion of Theorem~\ref{theorem reduction} is unaffected by this change since for any linear recurrence sequence $\{w_n\}_{n\in\N_0}$, the sequence $\{w_{an+b}\}_{n\in\N_0}$ satisfies also a linear recurrence and its characteristic roots are the $a$-th powers of the characteristic roots of $\{w_n\}_{n\in\N_0}$; see also  Remark~\ref{rem:F-arithmetic}~(iii). 

So, we need to determine for which $n\in\N_0$ there exist some $n_1,\dots, n_m\in\N_0$ such that $\Phi^n(\alpha)-\sum_{j=1}^m F^{k_jn_j}(\gamma_j)\in H$, or equivalently, for which $n\in\N_0$ we have that
\begin{equation}
\label{condition to be in H}
\sum_{k=1}^s \left(U^{(k)}_n - \sum_{i=1}^m A^{(i,k)}_{n_i}\right) P_k \in -S_0+H.
\end{equation}
Now, according to \cite[Claim 3.4, Definition 3.5, Subclaim 3.6]{groups}, the condition \eqref{condition to be in H} is equivalent with a system formed by finitely many equations either of the form
\begin{equation}
\label{condition to be in H equality}
\sum_{k=1}^s e_k\cdot \left(U^{(k)}_n - \sum_{i=1}^m A^{(i,k)}_{n_i}\right)=0,
\end{equation}
for some integers $e_k$, or of the form
\begin{equation}
\label{condition to be in H congruence}
\sum_{k=1}^s f_k\cdot \left(U^{(k)}_n - \sum_{i=1}^m A^{(i,k)}_{n_i}\right)\equiv 0\pmod{L}
\end{equation}
for some integers $f_k$ and some $L\in \N$. The solutions $(n,n_1,\cdots, n_m)$ to each congruence equation \eqref{condition to be in H congruence} is a finite union of sets of the form 
$$\{(an+b,a_1n_1+b,\cdots, a_mn+b_m)\colon n,n_1,\cdots,n_m\in\N_0\}$$
for some $a,a_i,b,b_i\in\N_0$ (for $i=1,\dots, m$). Therefore, again refining our analysis to finitely many arithmetic progressions, we may reduce to the case that $n$ (along with $n_1,\dots, n_m$) satisfies finitely many equations of the form \eqref{condition to be in H equality}. 


Each equation of the form \eqref{condition to be in H equality} reduces to 
\begin{equation}
\label{condition to be in H 2}
W_n=\sum_{i=1}^m B^{(i)}_{n_i},
\end{equation} 
where $W_n:=\sum_{k=1}^s e_kU^{(k)}_n$ and $B^{(i)}_n:=\sum_{k=1}^s e_k\cdot A^{(i,k)}_n$. So, $\{W_n\}_{n\in\N_0}$ is another linear recurrence sequence, while $\{B^{(i)}_n\}_{n\in\N_0}$ are linear recurrence sequences whose characteristic roots are the same as the characteristic roots of the original sequences $A^{(i,k)}$. In particular, the characteristic roots of $B^{(i)}$ are of the form $\lambda_j^{b_i}$ for some $b_i\in\N_0$, where the $\lambda_j$'s are the roots of $P_{{\rm min},F}$; note that $b_i=0$ if some $k_i=0$ and so, there are at most $\dim(V)$ nontrivial linear recurrence sequences $B^{(i)}$ as we vary $i=1,\dots, m$.  Furthermore, the characteristic roots of the linear recurrence sequence $\{W_n\}$ are positive integer powers of the roots of $P_{{\rm min},\Phi_0}$ and perhaps, also equal to $1$ if $\Phi$ is not a group endomorphism (i.e., $y\ne 0$); on the other hand, if $\Phi=\Phi_0$ (i.e., $y=0$), then the characteristic roots of $W$ are all positive integer powers of the roots of $P_{{\rm min},\Phi}=P_{{\rm min},\Phi_0}$. Finally, we note that if $m=0$ then Equation~\ref{condition to be in H 2} is void and therefore, the corresponding $F$-arithmetic sequence is simply an arithmetic progression; see Remark~\ref{rem:F-arithmetic}~(i).

This concludes our proof of Theorem~\ref{theorem reduction}.
\end{proof}


\section{Recovering a large class of polynomial-exponential equations as part of proving Conjecture~\ref{char p DML}}
\label{section all polynomial-exponential equations}

In this section we use a coding trick from~\cite{SY2014} to show that 
not only does the dynamical Mordell-Lang problem reduce to 
solving polynomial-exponential equations, but conversely complicated 
polynomial-exponential equations may be realized as instances of the
dynamical Mordell-Lang problem.

We start by extracting a key fact from~\cite{SY2014}.

\begin{proposition}
\label{encode lrs}
Let $\{ u_n \}$ be a linear recurrence sequence of integers, $G$ a 
commutative algebraic group over some field $K$ and $P \in G(K)$ 
a $K$-rational point on $G$.  Then there are a natural number
$N \in \N$, morphisms of algebraic groups $\Phi:G^N \to G^N$ and 
$\pi:G^N \to G$, and a point $Q \in G^N(K)$ so that for every $n \in \N_0$
we have $\pi ( \Phi^n (Q)) = [u_n]_G (P)$.	
\end{proposition}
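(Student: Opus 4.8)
The plan is to realize $\{u_n\}$ as the orbit of the companion matrix of its defining recurrence, acting coordinatewise on a power of $G$. By the definition of a linear recurrence sequence there are $m\in\N$ and integers $c_0,\dots,c_{m-1}$ with $u_{n+m}+c_{m-1}u_{n+m-1}+\dots+c_0u_n=0$ for \emph{every} $n\in\N_0$, so no exceptional initial terms need separate treatment. I would take $N:=m$ and let $\Phi:G^N\to G^N$ be the endomorphism attached to the companion matrix
$$M=\begin{pmatrix} 0 & 1 & 0 & \cdots & 0\\ 0 & 0 & 1 & \cdots & 0\\ \vdots & & & \ddots & \vdots\\ 0 & 0 & 0 & \cdots & 1\\ -c_0 & -c_1 & -c_2 & \cdots & -c_{m-1}\end{pmatrix},$$
an $m\times m$ matrix with integer entries. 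Here any integer matrix $A=(a_{ij})$ acts on $G^m$ by $(x_1,\dots,x_m)\mapsto\bigl(\sum_{j}[a_{1j}]_G x_j,\dots,\sum_{j}[a_{mj}]_G x_j\bigr)$; since $G$ is commutative, each multiplication-by-$a$ map $[a]_G$ and the addition law are morphisms of algebraic groups, so each coordinate of this map is a $\Z$-linear combination of projections and $A$ indeed defines a morphism of algebraic groups $G^m\to G^m$. Moreover this assignment is multiplicative (the product $AB$ acts as $A\circ B$), again because commutativity gives $[a]_G\circ[b]_G=[ab]_G$ together with additivity.

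I would then set $Q:=\bigl([u_0]_G(P),[u_1]_G(P),\dots,[u_{m-1}]_G(P)\bigr)\in G^m(K)$ and let $\pi:G^m\to G$ be projection onto the first coordinate, which is a morphism of algebraic groups. The core of the argument is the identity
$$\Phi^n(Q)=\bigl([u_n]_G(P),[u_{n+1}]_G(P),\dots,[u_{n+m-1}]_G(P)\bigr)\qquad(n\in\N_0),$$
which I would prove by induction on $n$. The case $n=0$ is the definition of $Q$. For the inductive step, applying $\Phi$ shifts coordinates: the first $m-1$ entries of $\Phi(\Phi^n(Q))$ are the last $m-1$ entries of $\Phi^n(Q)$, namely $[u_{n+1}]_G(P),\dots,[u_{n+m-1}]_G(P)$, while the last entry equals $[-c_0u_n-\cdots-c_{m-1}u_{n+m-1}]_G(P)=[u_{n+m}]_G(P)$ by the recurrence. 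Applying $\pi$ to the displayed identity gives $\pi(\Phi^n(Q))=[u_n]_G(P)$ for all $n\in\N_0$, which is exactly the asserted conclusion.

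There is no serious obstacle here: this is the classical companion-matrix device (as extracted from \cite{SY2014}), and the only points requiring care are (i) checking that the matrix action produces genuine \emph{morphisms of algebraic groups} — this is precisely where commutativity of $G$ enters, since it makes inversion $[-1]_G$ and the group law homomorphisms, so that $[a]_G$ for negative $a$ and the coordinate sums are legitimate — and (ii) index bookkeeping, so that $M$ implements exactly the shift $(u_n,\dots,u_{n+m-1})\mapsto(u_{n+1},\dots,u_{n+m})$. If one prefers a coordinate-free formulation, one can observe that the homomorphism $\iota:G\to G^m$, $x\mapsto([u_0]_G x,\dots,[u_{m-1}]_G x)$, satisfies $\Phi^n\circ\iota=\iota_n$ where $\iota_n(x)=([u_n]_G x,\dots,[u_{n+m-1}]_G x)$ and $Q=\iota(P)$, but the direct induction above is the cleanest route.
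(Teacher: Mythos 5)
Your construction is essentially the paper's own proof: the paper simply outsources the $\Z$-linear realization $u_n=\varpi(\phi^n(a))$ to Proposition 4.3 and Theorem 4.5 of \cite{SY2014} and then transfers it to $G^N$ exactly as you do, by letting integer matrices act through the maps $[a]_G$; your companion-matrix argument makes that first step explicit, and your induction, together with the observation that integer matrices define morphisms of algebraic groups because $G$ is commutative, is correct.

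The one point you should repair is the opening claim that ``by the definition'' the coefficients $c_0,\dots,c_{m-1}$ may be taken to be integers. With the paper's definition (Section~\ref{subsection arithmetic sequences}) a linear recurrence sequence of integers is only required to satisfy a recurrence with \emph{complex} coefficients, while integrality of the $c_i$ is genuinely needed in your argument: $[c]_G$ only makes sense for $c\in\Z$, so the companion matrix must have integer entries for $\Phi$ to be a morphism of algebraic groups. The fact you need --- that an integer-valued linear recurrence sequence satisfies a \emph{monic integer} recurrence valid for every $n\in\N_0$ --- is true but is a small theorem rather than a definition: by Fatou's lemma the generating function $\sum_n u_nx^n$ equals $P(x)/Q(x)$ with $P,Q\in\Z[x]$ and $Q(0)=1$, and the recurrence with characteristic polynomial $x^{j}\bigl(x^{m}Q(1/x)\bigr)$, with $j$ sufficiently large, is monic with integer coefficients and holds for all $n\in\N_0$, at the harmless cost of increasing the order $m$. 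With that justification added --- or with an explicit appeal to \cite{SY2014}, as the paper does --- your proof is complete.
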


\begin{proof}
By Proposition 4.3 and Theorem 4.5 of~\cite{SY2014}	there are a 
natural number $N \in \N$, a vector $a \in \Z^N$, and linear maps 
$\phi:\Z^N \to \Z^N$ and $\varpi:\Z^N \to \Z$ so that for every natural 
number $n \in \N_0$ we have $\varpi(\phi^n (a)) = u_n$.   Write 
$a$ as $a = (a_1, \ldots, a_N)$ and $\phi$ and $\varpi$ as matrices, 
so that $\phi(x_1, \ldots,x_N) = (\sum_{j=1}^N \phi_{1,j} x_j,
\ldots, \sum_{j=1}^N \phi_{N,j} x_j)$ and $\varpi(x_1, \ldots, x_N) 
= \sum_{j=1}^N \varphi_j x_j$.  Set $Q := ([a_1]_G(P), 
\ldots, [a_N]_G(P))$, where $a:=(a_1,\dots, a_N)$,  let $\Phi:G^N \to G^N$ be the map 
$(x_1, \ldots, x_N) \mapsto ( \sum_{j=1}^N 
[\phi_{1,j}]_G(x_j),  \ldots, \sum_{j=1}^N [\phi_{N,j}]_G(x_j) )$
and let $\pi:G^N \to G$ be the map $(x_1, \ldots, x_N) \mapsto 
\sum_{j=1}^N [\varpi_j]_G(x_j)$.   Then for $n \in \N$ we have 
$\pi (\Phi^n (Q)) = [u_n]_G(P)$. 
\end{proof}

It follows from Proposition~\ref{encode lrs} that we 
can realize many sets defined by conditions of the 
form $u_n \in E$ where $\{ u_n \}$ is a 
linear recurrence sequence of integers and $E \subseteq \N$ is a $p$-set 
as dynamical return sets.  More precisely, we have the 
following proposition.

\begin{proposition}
\label{prop:lrs plus ML}
Let $\{ u_n \}$ be a linear recurrence sequence of integers, $G$  
a commutative algebraic group over the field $K$, $X \subseteq G$ 
a subvariety, $P \in G(K)$ a $K$-rational point on $G$, and 
$E := \{ n \in \Z ~:~ [n]_G(P) \in X(K)\}$.  Then there is an algebraic
dynamical system $(Y,\Phi)$ over $K$, a point $Q \in Y(K)$ and 
a subvariety $Z \subseteq Y$ so that for all $n \in \N_0$ one has 
$\Phi^n(Q) \in Z(K) \Longleftrightarrow u_n \in E$.
\end{proposition}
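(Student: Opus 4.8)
The plan is to combine Proposition~\ref{encode lrs} with the trivial dynamical system on $G$ whose return set is $E$, by running the two in parallel on a product. First I would apply Proposition~\ref{encode lrs} to the linear recurrence sequence $\{u_n\}$, the algebraic group $G$ and the point $P$: this produces $N\in\N$, a morphism of algebraic groups $\Phi_1:G^N\to G^N$, a morphism $\pi:G^N\to G$, and a point $Q_1\in G^N(K)$ with $\pi(\Phi_1^n(Q_1))=[u_n]_G(P)$ for every $n\in\N_0$. The point is that the orbit of $Q_1$ under $\Phi_1$ ``contains'' the sequence $[u_n]_G(P)$ in its $\pi$-image, and $u_n\in E$ is by definition exactly the condition $[u_n]_G(P)\in X(K)$, i.e. $\pi(\Phi_1^n(Q_1))\in X(K)$, i.e. $\Phi_1^n(Q_1)\in\pi^{-1}(X)(K)$. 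Thus with $Y:=G^N$, $\Phi:=\Phi_1$, $Q:=Q_1$ and $Z:=\pi^{-1}(X)$ we already get $\Phi^n(Q)\in Z(K)\iff u_n\in E$, since $\pi^{-1}(X)$ is a subvariety of $G^N$ (the preimage of a subvariety under a morphism) and $Z(K)=\{y\in G^N(K):\pi(y)\in X(K)\}$.

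The key steps, in order, are then: (i) invoke Proposition~\ref{encode lrs} to obtain $(G^N,\Phi_1,\pi,Q_1)$ realizing $\pi(\Phi_1^n(Q_1))=[u_n]_G(P)$; (ii) set $Z:=\pi^{-1}(X)$, noting this is a closed subvariety of $G^N$ because $\pi$ is a morphism and $X$ is closed in $G$; (iii) chase the definitions: $\Phi_1^n(Q_1)\in Z(K)$ means $\pi(\Phi_1^n(Q_1))\in X(K)$ means $[u_n]_G(P)\in X(K)$ means $n$ belongs to $\{n:[n]_G(P)\in X(K)\}=E$ (using $n=u_n$? no---rather using that membership of the \emph{value} $u_n$ in $E$ is the condition $[u_n]_G(P)\in X(K)$), hence $u_n\in E$. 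One must be slightly careful that $E$ is indexed by integers via $[\cdot]_G(P)$, so ``$u_n\in E$'' literally unwinds to ``$[u_n]_G(P)\in X(K)$'', which is precisely $\pi(\Phi^n(Q))\in X(K)$.

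I do not expect a serious obstacle here: Proposition~\ref{encode lrs} does all the real work, and the remaining content is the observation that pulling back the target subvariety $X$ along $\pi$ converts the image condition into a membership condition for the orbit itself. The only point requiring a line of justification is that $\pi^{-1}(X)$ is genuinely a subvariety (closed subscheme) of $Y=G^N$ and that its $K$-points are as claimed; this is standard since $\pi$ is a morphism of varieties over $K$ and $X$ is closed in $G$, so $Z=\pi^{-1}(X)$ is closed in $G^N$ and $Z(K)=\pi^{-1}(X(K))$. Taking $Y=G^N$, $\Phi$ the map from Proposition~\ref{encode lrs}, $Q$ the point from that proposition, and $Z=\pi^{-1}(X)$ completes the construction.
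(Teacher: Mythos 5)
Your proposal is correct and coincides with the paper's own proof: both apply Proposition~\ref{encode lrs} and then take $Y=G^N$, $Z=\pi^{-1}(X)$, with the same point $Q$ and map $\Phi$, so that $\Phi^n(Q)\in Z(K)$ unwinds to $[u_n]_G(P)\in X(K)$, i.e.\ $u_n\in E$. The extra remarks on $\pi^{-1}(X)$ being a closed subvariety with the expected $K$-points are fine and just make explicit what the paper leaves implicit.
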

\begin{proof}
Let $N$, $\Phi$, $\pi$ and $Q$ be given by Proposition~\ref{encode lrs}
applied to $\{ u_n \}$, $G$, and $P$.  Set $Y := G^N$ and $Z := \pi^{-1}(X)$; this yields the conclusion of Proposition~\ref{prop:lrs plus ML}.
\end{proof}

Many $p$-sets are known to be realizable as the exponent sets 
for instances of the Mordell-Lang problem in characteristic 
$p$.  With the next proposition, which is 
inspired by special examples constructed in~\cite{Ne17},
 we note that it is true in 
particular for sets of the form $\{ \sum_{i=1}^\ell c_i p^{n_i} : 
(n_1, \ldots, n_\ell) \in \N_0^\ell \}$ with $c_1, \ldots, c_\ell \in \N$ 
provided that 
$\sum_{i=1}^\ell c_i < p - 1$.

\begin{proposition}
\label{prop:p-set as ML}
Let $\ell \in \N$, $c_1, \ldots, c_\ell \in \N$ and $p \in \N$ be a
prime number satisfying $\sum_{i=1}^\ell c_i < p -1$.  Then there is an
algebraic torus $G$ over $\F_p$, a subvariety $X \subseteq G$ over 
$\F_p$ and a point $P \in G(\F_p(t))$ so that if 
$E := \{ m \in \Z ~:~ [m]_G(P) \in X(\F_p(t)) \}$, then 
$E = \{ \sum_{j=1}^\ell c_j p^{n_j} ~:~ (n_1,\ldots,n_m) \in \N_0^\ell \}$. 	
\end{proposition}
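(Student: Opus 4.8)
The plan is to realize the set $\{\sum_{j=1}^\ell c_j p^{n_j}\}$ directly as a multiplicative Mordell-Lang return set, exploiting the fact that in $\F_p(t)$ the map $x \mapsto x^p$ sends $t-1$ to $t^p - 1$, so that $(t-1)^{p^n}$ has a single simple root at $t=1$ of order $p^n$. First I would work inside the torus $G = \Gm^{\ell+1}$ over $\F_p$, and take $P = (t-1, t-1, \ldots, t-1, t-1) \in G(\F_p(t))$, or more precisely a point whose coordinates are built from $t-1$ and a fixed scalar. The key observation is that for a tuple of integers $(m_0, m_1, \ldots, m_\ell)$, the point $[m_0, \ldots, m_\ell]_G(P)$ lies on a carefully chosen subtorus-translate $X$ precisely when the $m_i$ satisfy a relation forcing each $m_i$ (for $i \ge 1$) to be of the form $c_i p^{n_i}$ and $m_0$ to record the total. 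Concretely, I would choose $X$ to be cut out by equations of the shape $x_i = x_0^{a_i} \cdot (\text{scalar})$ together with one equation expressing that a certain product of the coordinates equals $1$; the valuation of $t-1$ and the valuations at the other places of $\F_p(t)$ (including $t = \infty$) will force the exponents appearing to be pure powers of $p$.

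The mechanism making powers of $p$ appear is the following: if $w = (t-1)^m$ for $m \in \Z$, then $w \in \F_p(t)$ is a $p$-th power in the sense that its divisor is $p$-divisible only when $p \mid m$; iterating, $w$ is a $p^n$-th power iff $p^n \mid m$. So I would set things up so that membership of $[m]_G(P)$ in $X$ is equivalent to a system of the form ``$m_i$ is $c_i$ times a power of $p$ for each $i = 1, \ldots, \ell$, and $m_0 = \sum_{i=1}^\ell m_i$.'' The constraint $\sum_{i=1}^\ell c_i < p-1$ is exactly what is needed to guarantee that there is no ``carrying'' or collision among the base-$p$ digit patterns: writing $n = \sum c_i p^{n_i}$, the hypothesis ensures that the multiset $\{n_i\}$ can be recovered from $n$ (for instance by looking at the base-$p$ expansion of $n$ when the $n_i$ are distinct, and treating coincidences separately), so that the fibre of the map $(n_1, \ldots, n_\ell) \mapsto \sum c_j p^{n_j}$ behaves well and no spurious solutions are introduced. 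I would use this bound to show that the return set is exactly $\{\sum_{j=1}^\ell c_j p^{n_j}\}$ and nothing more.

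The concrete construction I expect to use: take $G = \Gm^\ell$ over $\F_p$ with $X$ defined by the single equation $x_1 x_2 \cdots x_\ell = \zeta$ for a suitable constant $\zeta \in \F_p^\times$, or a slight variant, and choose $P = (\beta_1, \ldots, \beta_\ell)$ where $\beta_i$ is chosen so that $\beta_i^m \in \F_p$ forces $m$ to lie in $c_i \cdot p^{\N_0} \cup \{0\}$ up to the allowed ambiguity. A clean way: let $\beta_i = (t-1)^{c_i}/\gamma_i$ with $\gamma_i \in \F_p[t]$ chosen so that $\beta_i$ is a unit away from $t = 1$ and $t = \infty$, and arrange the exponents so that $[m]_G(P) \in X(\F_p(t))$ reads off as $(t-1)^{\sum c_i m_i \cdot(\text{stuff})} = (\text{element of } \F_p)$, which by the $p$-power criterion above pins down each contributing exponent. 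The main obstacle will be the bookkeeping in this last step: verifying that the subvariety $X$ one writes down has return set \emph{exactly} $\{\sum c_j p^{n_j}\}$, with no extra components and no missing ones, which requires carefully using both the $p$-divisibility of divisors in $\F_p(t)$ and the digit-non-overlap guaranteed by $\sum c_i < p-1$. Everything else — that $G$ is a torus over $\F_p$, that $X$ is defined over $\F_p$, that $P \in G(\F_p(t))$ — is immediate from the construction.
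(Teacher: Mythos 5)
There is a genuine gap, and it starts with the shape of the problem. The proposition asks for a \emph{single} point $P$ and the return set of the cyclic group generated by it: $E=\{m\in\Z : [m]_G(P)\in X(\F_p(t))\}$, where $[m]_G$ raises every coordinate to the \emph{same} power $m$. Your central device — a tuple $(m_0,m_1,\dots,m_\ell)$ of independent exponents with ``each $m_i$ equal to $c_ip^{n_i}$ and $m_0=\sum_i m_i$'' — lives in the multi-variable (finitely generated group) setting and never gets collapsed back to one integer $m$; as written it proves a different statement. Moreover, even within the single-$m$ setting your proposed $X$ (equations $x_i=x_0^{a_i}\cdot(\text{scalar})$ plus a product relation) is a translate of a subtorus, so each defining condition evaluated at $[m]_G(P)$ reads $Q^m=\zeta$ for a fixed $Q\in\F_p(t)^\times$; such conditions can only carve out a finite set or a union of arithmetic progressions in $m$, never $\{\sum_j c_jp^{n_j}\}$. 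Likewise, your mechanism ``$(t-1)^m$ is a $p^n$-th power iff $p^n\mid m$'' detects divisibility of $m$ by powers of $p$, not the base-$p$ digit sum, so it cannot isolate sums of $\ell$ powers of $p$. Your reading of the hypothesis $\sum_i c_i<p-1$ (``no carrying, recover the multiset $\{n_i\}$'') is also not where it is needed, and in any case it is attached to no construction that produces the required equivalence.

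For comparison, the paper's construction is of a quite different nature: it takes $G=\G_m^{p-1}$, $P=(t+1,t+2,\dots,t+p-1)$, and lets $X$ be cut out by \emph{additive-linear} equations $\sum_a A_{k,a}x_a=\delta_{k,\ell'}$ whose coefficients come from the inverse of the Vandermonde matrix $(a^j)$ over $\F_p$ (here $\ell'=\sum_i c_i$); this $X$ is exactly the image of $(y_1,\dots,y_{\ell'})\mapsto\bigl(\prod_j(y_j+1),\dots,\prod_j(y_j+p-1)\bigr)$, so $[m]_G(P)\in X$ when $m$ is a sum of $\ell'$ powers of $p$ because $(t+a)^{p^n}=t^{p^n}+a$. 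The converse uses the binomial expansion of $(t+a)^m$, the Vandermonde identity, and Gau{\ss}'s observation that $\binom{m}{j}\not\equiv 0\pmod p$ when the base-$p$ digits of $j$ are dominated by those of $m$, to force the digit sum of $m$ to equal $\ell'$ exactly; the hypothesis $\sum_i c_i<p-1$ is what keeps $\ell'$ strictly below $p-1$ so that the congruence $m\equiv\ell'\pmod{p-1}$ pins the digit sum down. The general coefficients $c_i$ are then handled by passing to the subvariety of $X$ parametrized with exponents $c_j$. None of these ingredients (the Vandermonde inverse, the additive linear equations, the digit-sum argument via Lucas/Gau{\ss}) appear in your sketch, and without something playing their role the ``bookkeeping'' you defer is precisely the unsolved part of the problem.
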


\begin{proof}
We begin with the case of $c_1 = c_2 = \cdots = c_\ell = 1$ and 
then use this result to deduce the general result.  

The Vandermonde matrix 
$$\big( a^j \big)_{\substack{ 1 \leq a \leq p-1 \\
0 \leq j < p-1}} \text{ ,}$$ regarded as a $(p-1) \times (p-1)$
matrix over $\F_p$, is invertible.  Write its inverse as  
$$ \big( A_{j,a} \big)_{\substack{1 \leq a \leq p-1 \\
0 \leq j < p-1}} $$  
so that for $0 \leq k < p-1$ and
$j \in \Z$ we have


\begin{equation}
\label{vdm eq}	
\sum_{a \in \F_p^\times} A_{k,a} a^j  =  
\left\{\begin{array}{cc}
 1 & \text{ if } j \equiv k \pmod{p-1} \\
 0 & \text{ otherwise.}	
 \end{array}\right.
\end{equation}

Let $G := \G_m^{p-1}$.  Set $P := (t+1, t+2, 
\ldots, t+p-1) \in G(\F_p(t))$.  Let $X \subseteq G$ be the subvariety 
defined by the equations  
$$\sum_{i=1}^{p-1} A_{\ell,i} x_i = 1\text{ and }\sum_{i=1}^{p-1} A_{k,i} x_i = 0\text{ for }\ell < k \le p-1.$$  
It is easy to 
see that $X$ is the variety parametrized by the map  
\begin{equation}
\label{map pi}
\pi:(y_1, \ldots, y_\ell) \mapsto \left( \prod_{j=1}^\ell (y_j + 1), 
\prod_{j=1}^\ell (y_j + 2), \ldots, \prod_{j=1}^\ell (y_j + p-1) \right).
\end{equation}     
In particular, if $n_1, \ldots, n_\ell \in \N_0$ and 
$m = \sum_{i=1}^\ell p^{n_i}$,  then 
\begin{align*}
[m]_G(P) & =  ( (t + 1)^m, \ldots, (t + p - 1)^m)\\
& = 
\left( (t +1)^{\sum_{i=1}^\ell p^{n_i}}, \ldots, (t + p - 1)^{\sum_{i=1}^\ell p^{n_i}}\right)\\
& = 
\left( \prod_{i=1}^\ell (t+1)^{p^{n_i}}, \ldots, \prod_{i=1}^\ell (t + p -1)^{p^{n_i}}\right) \\
& = 
\left( \prod_{i=1}^\ell (t^{p^{n_i}} + 1), \ldots, \prod_{i=1}^\ell (t^{p^{n_i}} + p - 1) \right) 
\in X(\F_p(t)).
\end{align*}   

Let us now show that if $[m](P) \in X(\F_p(t))$, then 
$m$ is a sum of $\ell$ powers of $p$. 
Considering poles of $(t+i)^m$ (for $i=1,\dots, p-1$), one sees that if $m < 0$, then $[m](P) \notin X(\F_p(t))$.  
Indeed, taking $b \in \F_p^\times$ for which $A_{\ell,b} \neq 0$, we see
that if $m < 0$, then the order of the pole at $-b$ of $\sum_{a \in \F_p^\times} 
A_{\ell,a} (t+a)^m$ is $-m$; so,  in particular, this sum is not equal to $1$.  

Take now $m \geq 0$.   Expanding, we have 

\begin{eqnarray*}
1 & = & \sum_{a \in \F_p^\times} A_{\ell,a} (t+a)^m \\
	& = & \sum_{a \in \F_p^\times} A_{\ell,a} \sum_{j=0}^m \binom{m}{j} a^j t^{m-j} \\
	& = & \sum_{j=0}^m t^{m-j} \binom{m}{j} \sum_{a \in \F_p^\times} A_{\ell,a} a^j 	
\end{eqnarray*}

Equating the coefficients of the powers of $t$, we see that 
\begin{equation}
\label{sum to one}
1 = \sum_{a \in \F_p^\times} A_{\ell,a} a^m
\end{equation}
and that for $j < m$ we have 
\begin{equation}
\label{sum to zero}
0 = \binom{m}{j} \sum_{a \in \F_p^\times} A_{\ell,a} a^j \text{ .}
\end{equation}
In particular, 
Equation~\ref{sum to one} implies that 
\begin{equation}
\label{m congruent l p-1}
m \equiv \ell \pmod{p - 1}.
\end{equation} 
Let us write $m$ in its base $p$ expansion, so that 
$$m = \sum_i m(i,p) p^i\text{ with }0 \leq m(i,p) < p.$$
Then $m\equiv \sum_i m(i,p)\pmod{p-1}$ and therefore, congruence equation \eqref{m congruent l p-1} yields that 
\begin{equation}
\label{m at least l}
\sum_i m(i,p)\ge \ell.
\end{equation}
Therefore, in order to obtain the desired conclusion in this special case of Proposition~\ref{prop:p-set as ML} (when $c_1=\cdots=c_\ell=1$), it suffices to prove that $\sum_i m(i,p)\le \ell$. 

Now, assuming we have a strict inequality in \eqref{m at least l}, then we can find some positive integer $j$ having a 
base $p$ expansion strictly dominated by that of $m$, that is,  if we write 
$$j = \sum_i j(i,p) p^i\text{ with }0 \leq j(i,p) < p,$$ 
then for all $i$ we have $j(i,p) \leq m(i,p)$ but 
$j(i,p) < m(i,p)$ for some $i$, and moreover, we can choose $j$  so that $j \equiv \ell \pmod{p -1}$.     


As Gau{\ss} observed (see~\cite[Lemma 15.21]{Eisenbud}), $\binom{m}{j} 
\not \equiv 0 \pmod{p}$.  
Thus, from 
Equation~\ref{sum to zero}, we see that $\sum_{a \in \F_p^\times} A_{\ell,a} a^j = 0$, but 
as $j \equiv \ell$, this contradicts Equation~\ref{vdm eq}.  Hence, 
$\sum m(i,p) \le \ell$, which combined with inequality \eqref{m at least l}, yields that $m$ must be the sum of exactly $\ell$ powers of $p$.

For the general case (i.e., arbitrary $c_i$ satisfying the inequality $\sum_{i=1}^\ell c_i<p-1$), we first adjust the notation somewhat taking $\ell' := 
\sum_i c_i$ and let $X' \subseteq \Gm^{p-1}$ be the variety constructed above corresponding to $\ell ' < p-1$.   
Let $X \subseteq X'$ be the subvariety parametrized by the map 
$$\nu:(y_1, \ldots, y_\ell) \mapsto \left( \prod_{j=1}^\ell (y_j + 1)^{c_j}, 
\prod_{j=1}^\ell (y_j + 2)^{c_j}, \ldots, \prod_{j=1}^\ell (y_j + p - 1)^{c_j}\right)$$ 
which may be seen as the composite of $\pi$ with a multidiagonal map from 
$\mathbb{A}^\ell$ to $\mathbb{A}^{\ell'}$.    As before, we take 
$P = ((t+1), (t+2), \ldots, (t+p-1))$.   Visibly, if $m$ is expressible
in the form $\sum_i c_i p^{n^i}$, then $[m](P) \in X(\F_p(t))$.  Conversely, 
for a general integer $m$, if $[m](P) \in X(\F_p(t))$, then because 
\emph{a fortiori} $[m](P) \in X'(\F_p(t))$ we may express $m$ as a sum of 
$\ell'$ powers of $p$.  Then the preimage of $[m](P)$ under the map $\pi$ from \eqref{map pi} consists of
sequences of the form $\left(t^{p^{n_1}}, \ldots, t^{p^{n_{\ell'}}}\right)$.  Since
$[m](P)$ is also in the image of $\nu$, the exponents must repeat $c_1$ times, and 
then $c_2$ times and so on, so that $m$ may be expressed in the 
form $\sum_{j=1}^\ell c_j p^{n_j}$.
\end{proof}
  
Combining Propositions~\ref{prop:lrs plus ML} and~\ref{prop:p-set as ML}  we deduce  Theorem~\ref{theorem reduction to polynomial-exponential equations} 
announced in the introduction.


\section{A diophantine result}
\label{section diophantine}
 
The following result is key for our proofs of Theorems~\ref{surface result} and \ref{Frobenius result}.
\begin{theorem}
\label{theorem polynomial-exponential}
Let $\{u_n\}_{n\in\N_0}$ be a linear recurrence sequence, let $p$ be a prime number, let $m\in\N$, let $c_1,\dots, c_m\in\Z$ and let $k_1,\dots, k_m\in\N_0$.
\begin{enumerate}
\item[(A)] If $p$ is multiplicatively independent with respect to each one of the characteristic roots of the linear recurrence sequence $\{u_n\}$ (i.e., for each characteristic root $\lambda$ of $\{u_n\}$ and for any integers $r$ and $s$, if $\lambda^r=p^s$, then $r=s=0$), then the set of all $n\in\N_0$ for which there exist some $n_1,\dots, n_m\in\N_0$ such that 
\begin{equation}
\label{u in T}
u_n=c_1p^{k_1n_1}+\cdots + c_mp^{k_mn_m}
\end{equation}
is a finite union of arithmetic progressions. (As always when dealing with an arithmetic progression $\{an+b\}_{n\in\N_0}$, we allow the possibility that $a=0$, in which case, the arithmetic progression becomes a singleton.)
\item[(B)] If $m\le 2$, then the set of all $n\in\N_0$ for which there exist some $n_1,n_2\in\N_0$ such that $u_n=c_1p^{k_1n_1}+c_2p^{k_2n_2}$ is a union of finitely many arithmetic progressions along with finitely many sets of the form
\begin{equation}
\label{set 2 p}
\left\{d_0+d_1p^{\ell_1n_1}+d_2p^{\ell_2n_2}\colon n_1,n_2\in\N_0\right\},
\end{equation}
for some rational numbers $d_0,d_1,d_2$, and some non-negative integers $\ell_1,\ell_2$. (In particular, if $\ell_1=\ell_2=0$, the set from \eqref{set 2 p} is a singleton.)
\end{enumerate}
\end{theorem}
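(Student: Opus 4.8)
The plan is to reduce the polynomial-exponential equation \eqref{u in T} to a problem about linear combinations of powers of $p$ being themselves represented by a linear recurrence sequence, and then to apply the theory of $S$-unit equations (i.e., the subspace theorem, in the quantitative form due to Schlickewei--Schmidt and its refinements) together with the structure theory of linear recurrence sequences developed in Section~\ref{subsection arithmetic sequences}.

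First I would pass to non-degenerate sequences: using the remarks around \eqref{re-writing u n}, by splitting $n$ (and each $n_i$) into finitely many arithmetic progressions, we may assume $\{u_n\}$ is non-degenerate and that each $\{p^{k_in_i}\}$ is already in the desired form. Write $u_n = \sum_{j=1}^s P_j(n)\lambda_j^n$ as in \eqref{general formula linear recurrence sequence}. The point is then to view a putative solution as an equation
\[
\sum_{j=1}^s P_j(n)\lambda_j^n \;-\; c_1 p^{k_1 n_1} \;-\;\cdots\;-\; c_m p^{k_m n_m} \;=\; 0,
\]
which we regard, for fixed exponent vector $(n,n_1,\dots,n_m)$, as a vanishing linear combination of the terms $P_j(n)\lambda_j^n$ and the $c_i p^{k_i n_i}$. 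Grouping the terms into minimal vanishing subsums, each term $c_i p^{k_i n_i}$ either lies in a vanishing subsum of bounded size consisting of other $p$-powers and $\lambda_j$-terms, or one shows by the subspace theorem that, outside a finite union of proper linear subspaces of the exponent space, no nontrivial subsum can vanish; the contributions from those exceptional subspaces are exactly what produce the arithmetic progressions (in case (A)) or the additional $p$-sets of the form \eqref{set 2 p} (in case (B)).

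For part (A): the hypothesis that $p$ is multiplicatively independent from every characteristic root $\lambda_j$ means that the heights of the terms $p^{k_i n_i}$ and $\lambda_j^n$ grow in genuinely independent ways, so a Vandermonde/valuation argument (estimating the $p$-adic valuation, or an archimedean size comparison, of both sides of \eqref{u in T}) forces all but boundedly many $n$ to fall into arithmetic progressions determined by collapsing subsums; after that one is left with finitely many equations of the shape $P_j(n)\lambda_j^n = (\text{fixed } p\text{-combination})$, and Proposition~\ref{Skolem result} (Skolem--Mahler--Lech) finishes each of them, since a non-degenerate sequence hits a fixed value infinitely often only if it is eventually constant. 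For part (B), with $m\le 2$, one can no longer exclude multiplicative dependence between $p$ and some $\lambda_j$; instead one separates the characteristic roots of $\{u_n\}$ into those multiplicatively dependent on $p$ (which, after refining to arithmetic progressions, become constant multiples of powers of $p$ and get absorbed into the right-hand side, enlarging it to a combination of at most a bounded number of $p$-powers) and those independent of $p$ (handled as in part (A)); the key point that $m\le 2$ guarantees is that the resulting equation still has at most two ``free'' $p$-powers, so Proposition~\ref{prop:intersections of p-arithmetic sequences} keeps the solution set inside $p$-sets of the form \eqref{set 2 p} rather than something more complicated.

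The main obstacle I expect is the bookkeeping in the subspace-theorem step: one must show that the finitely many exceptional subspaces produced by the subspace theorem translate into finitely many arithmetic progressions (resp.\ $p$-sets) in the exponent variables $(n,n_1,\dots,n_m)$, and in particular that an exceptional subspace in which, say, $n_1$ and $n_2$ are linearly tied still yields a set of the allowed form \eqref{set 2 p} after eliminating the tied variable. Making this rigorous requires the uniform (in the number of solutions) version of the subspace theorem so that the induction on the number of summands terminates, and careful tracking of the constraint $m\le 2$ in part (B) so that no $p$-set with three or more nontrivial powers is ever created; this is where one invokes Proposition~\ref{prop:intersections of p-arithmetic sequences} to control intersections and the non-degeneracy reductions to control the recurrence side.
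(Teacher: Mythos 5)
Your treatment of part (A) is essentially the paper's argument: pass to non-degenerate pieces along arithmetic progressions, group the terms of $\sum_j P_j(n)\lambda_j^n-\sum_i c_ip^{k_in_i}=0$ into minimal vanishing subsums, and use an $S$-unit/subspace-theorem result; the paper does this cleanly by quoting Laurent's theorem \cite{Laurent}, observing that the multiplicative independence hypothesis forces the relevant subgroup of $\Z^{m+1}$ to be trivial, so each non-degenerate piece contributes only finitely many solutions. Your sketch is vaguer (the "fixed $p$-combination" fed into Proposition~\ref{Skolem result} is not actually fixed, since the $n_i$ vary with $n$), but the route is the same and can be repaired by invoking Laurent's theorem directly.

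Part (B) has a genuine gap. Your plan is to "absorb" the characteristic roots multiplicatively dependent on $p$ into the right-hand side and then conclude via Proposition~\ref{prop:intersections of p-arithmetic sequences}. This does not work: the dependent terms have the form $P_j(n)p^{b_jn}$ with $P_j$ a possibly non-constant polynomial and with exponent $b_jn$ tied to $n$, so they are not of the shape $c\,p^{kn_i}$ with a free exponent and cannot be moved to the right-hand side as extra $p$-powers. After the valuation reductions (which the paper carries out), the problem collapses to the single equation $P_1(n)=c_1p^{n_1}+c_2p^{n_2}$ with $P_1$ a polynomial, and showing that its set of solvable $n$ is a finite union of arithmetic progressions and sets of the form \eqref{set 2 p} is precisely the hard content of part (B). None of the tools you cite reaches it: Skolem--Mahler--Lech does not apply because the right-hand side varies; Laurent's theorem gives nothing because now every root is multiplicatively dependent on $p$, so the group in its hypothesis is nontrivial; and Proposition~\ref{prop:intersections of p-arithmetic sequences} only intersects $p$-sets that are already known to occur, it cannot produce the $p$-set structure. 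The paper handles this core equation by expanding $n$ as a Puiseux/power series in $q=c_1p^{n_1}+c_2p^{n_2}$, applying the Corvaja--Zannier theorem on $S$-unit points on analytic hypersurfaces \cite{CZ} at an archimedean place when $n_1<n_2/2$ and at a $p$-adic place when $n_1\ge n_2/2$ (with extra care when $n_2-n_1$ is bounded or ramification occurs), and then analyzing the resulting one-dimensional cosets of $\G_m^2$ via polynomial identities such as $P_1(\psi(t))=c_1t^{du}+c_2t^{da}$ to show that infinitely many solutions force $P_1$ to have essentially at most two distinct roots and force $n$ into the shape \eqref{set 2 p}. Without an argument of this kind (or a substitute for it), your proposal does not prove part (B).
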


\begin{remark} 
The proof presented below yields much more than required for the present purposes; indeed, we can conclude finiteness of  the set of solutions except in a few well-described cases.

In particular, the proof for Part (B)  implicitly gives that only the cases  when $u_n$ is essentially a polynomial with at most two distinct roots may produce infinitely many solutions, provided $k_i\neq 0$ for $i=1,2$ in Theorem~\ref{theorem polynomial-exponential}~(B) (as we may clearly assume). 

We have not pursued in obtaining and stating optimal conclusions, because in the first place this is not needed by the present application and anyway the interested reader shall readily see what can be extracted from the arguments. 
\end{remark}

We split our proof of Theorem~\ref{theorem polynomial-exponential} into the two parts of its conclusion.

\begin{proof}[Proof of Theorem~\ref{theorem polynomial-exponential}, part (A)]
We prove part (A), showing how this follows directly from a theorem of M. Laurent, whose statement we borrow from W.M. Schmidt's presentation \cite{Sch03} (see Thm. 7.1 therein).

We write $u_n$ as an exponential polynomial (as in Section~\ref{subsection arithmetic sequences}):
\begin{equation}
u_n=\sum_{i=1}^sP_i(n)r_i^n,
\end{equation}
where the $P_i$ are nonzero polynomials and the $r_i\in \C^*$ are pairwise distinct.  For our purpose we may and shall  assume that all the involved numbers are algebraic numbers.\footnote{The case when this is not verified may be actually treated in a simpler way.}

In proving the theorem we may partition $\N_0$ into arithmetical progressions of any prescribed modulus and hence we may assume without loss of generality that the recurrence is non-degenerate (i.e. no ratio $r_i/r_j$ is a root of unity different from $1$).

Under this assumption, we prove that in fact we have only finitely many solutions.

We write our equation as
\begin{equation}\label{E.sum}
\sum_{i=1}^sP_i(n)r_i^n-\sum_{j=1}^mc_jp^{k_jn_j}=0,
\end{equation}
where $m,c_j,k_j$ are fixed. So our variables are $n,n_1,\ldots ,n_m$, which are supposed to vary in $\N_0$. 
For each solution we may partition the $s+m$ terms on the left side into subsets such that the sum over each subset vanishes; we may also suppose that each subset is minimal with respect to this property. 

Since the number of possible partitions is finite, we may suppose on reducing $s,m$ (but still assuming $s>0$) that  no proper subsum of the left side of \eqref{E.sum} vanishes. 

In this situation, Laurent's theorem takes into account the subgroup of $\Z^{m+1}$ consisting of the integer vectors $(a_0,a_1,\ldots ,a_m)$ for which, 
\begin{equation}
\label{eq:mult_dep}
r_\mu^{a_0}=r_\nu^{a_0}, \qquad r_\mu^{a_0}=p^{k_ja_j},
\end{equation}
for each $\mu,\nu\in\{1,\ldots ,s\}$ and each $j=1,\ldots ,m$.  The equations on the left of \eqref{eq:mult_dep} may give no information, since $s$ could be possibly $1$, but the assumption on multiplicative independence of $p$ and any of the roots $r_i$, applied to the set of equations on the right of \eqref{eq:mult_dep},  yields that $a_0=a_1=\ldots =a_m=0$, i.e. our group reduces to $0$. 

Then Laurent's theorem asserts that the set of solutions is indeed finite, as required.
\end{proof}

\begin{remark} 
\label{rem:last_remark}
It is to be observed that, given that $u_n$ is non-degenerate,  in order to obtain finiteness we used merely the existence of {\it one} of the roots $r_i$ multiplicatively independent of $p$. It may thus seem that the stronger assumption of the independence for {\it all} roots is not needed. The point is however that  in going to the non-degenerate case the assumption could be lost if it only held for a subset of the roots.
\end{remark}

\begin{proof}[Proof of Theorem~\ref{theorem polynomial-exponential}, part (B)]
This case is treated by using implicitly the Subspace Theorem (as is the case for Laurent's theorem). The technique has been applied in several papers of the first and fourth authors; see e.g. \cite{CZ1}. This method works here as well; however, unfortunately we cannot directly appeal to  the results proved before, because of slight differences in the assumptions and context. Therefore we develop again the same technique for the case in question. In short, this consists of expanding the relevant solutions in convergent power series in several variables, and then suitably applying the Subspace Theorem to the approximations so derived, using each time a suitable absolute value. At least we shall not need to repeat the use of the Subspace Theorem, and shall use instead a theorem proved in \cite{CZ}.  The need  that the variables $x_i$ appearing in \cite{CZ} are such that  the ratios $\log |x_i|/\log |x_j|$ be bounded above and below by fixed positive numbers (for the relevant absolute values) forces us to separate the proof into two cases, using an archimedean and a $p$-adic absolute value respectively.

As before we can argue on separate arithmetical progressions and thus we can assume at the outset that $u_n$ is non degenerate.

Arguing as before (see also Remark~\ref{rem:last_remark})  we can assume that each of the roots $r_i$ is multiplicatively dependent with $p$ (otherwise we may apply   Laurent's theorem  as before). 
Also, we may multiply everywhere by a power $p^{An}$. So, moving again along each among  finitely many arithmetical progressions of $n$, we may write
\begin{equation*}
r_i=p^{b_i},\qquad 0\le b_1<b_2<\ldots <b_s,
\end{equation*}
for integers $b_i$. Note that if some $b_i$ were negative, then equation \eqref{u in T} becomes trivial.

On changing slightly the notation, we may write $k_1=k_2=1$  and assume that for our solutions  we have $0\le n_1\le n_2$ (and that $n_1,n_2$ are divisible by certain fixed integers $k_1,k_2$).   We may also suppose that $u_n\neq 0$. 

Let $v=v_p$ be the $p$-adic order function, extended in some way to a number field containing the relevant quantities.

The equation gives   $v(c_1p^{n_1}+c_2p^{n_2})\ge b_1n+O(1)$, so we may assume that $n_1,n_2\ge b_1n+O(1)$. Then, dividing out by $p^{b_1n}$ we reduce to the case $b_1=0$. 

Now, $v(P_1(n)-c_1p^{n_1}-c_2p^{n_2})\ge b_2n+O(1)$.  If  $P_1(n)-c_1p^{n_1}-c_2p^{n_2}\neq 0$ we deduce successively that $n_2\ge b_2n+O(1)$ and then $n_1\ge b_2n+O(1)$, which would imply  $v(P_1(n))\gg n$, which  is inconsistent for large $n$. 

Therefore 
\begin{equation} \label{E.eq}
P_1(n)=c_1p^{n_1}+c_2p^{n_2}.
\end{equation}
Implicitly this implies $s=1$, and anyway this is the equation we shall deal with from now on.

For a given solution, we shall write 
\begin{equation*}
q:=c_1p^{n_1}+c_2p^{n_2},
\end{equation*}
 and we shall assume that $n$ is large enough to justify the subsequent approximations. So, $q$ shall also be large and therefore $n_2=\max\{n_1,n_2\}$ shall be large as well.  

We write $d:=\deg P_1>0$.

We split the analysis into two similar cases, according as $n_1<n_2/2$ or not.

{\bf Case I}. Suppose first $n_1<n_2/2$, so $\ell:=n_2-n_1>n_2/2$.  In equation \eqref{E.eq}, we expand in a Puiseux series for $n$ at $\infty$ in terms of $q$:
\begin{equation}\label{E.pui}
n=q^{1/d}\sum_{r=0}^\infty \gamma_rq^{-r/d},
\end{equation}
where $\gamma_r$ are certain coefficients depending only on $P_1$. Of course we tacitly mean that some definite branch has been chosen for the $d$-th root. 

In turn, we may  write $q^{-r/d}=c_2^{-r/d}p^{-rn_2/d}\left(1+\frac{c_1}{c_2}p^{-\ell}\right)^{-r/d}$ and expand the factor on the right by the binomial theorem.  Doing that for all $r\ge  0$ in \eqref{E.pui} we may write
\begin{equation}\label{E.pui2}
n=p^{n_2/d}f(p^{-n_2/d},p^{-\ell}),
\end{equation}
where $f(x,y):=\sum_{a,b\ge 0}\sigma_{a,b}x^ay^b$ is a certain power series with   coefficients in a  suitable  number field, converging for $x,y$ in a  region  $\max\{|x|,|y|\}\le \rho$ for a sufficiently small $\rho>0$. 

We apply to this series and solutions Theorem 1 of the paper \cite{CZ} (written by two of the authors), on taking therein $K$ a number field containing all relevant numbers, $S$ the set of  places of $K$ which are either archimedean or lie above $p$, and the sequence of points $\x :=(x_1,x_2)=\left(p^{-n_2/d},p^{-\ell}\right)$.  

In the Case I that we are treating we also choose $\nu$ as a complex absolute value for which the identity \eqref{E.pui2} holds.

Note that $f(\x)=np^{-n_2/d}$  is an $S$-integer, so assumption (3) for  \cite[Theorem~1]{CZ} holds. Also, the coordinates of $\x$ are $S$-units, so another of the assumptions in \cite[Theorem~1]{CZ} indeed holds. Further, we have $h(\x)\asymp n_2\asymp -\log |x_1|_\nu\asymp -\log |x_2|_\nu$, and this takes care of all other required inequalities.

The conclusion of the theorem delivers finitely many algebraic cosets $\u_1H_1,\ldots ,\u_\ell H_\ell$ of $\G_m^2$, whose union contains all of our points $\x$,  and such that the restriction of $f$ to each $\u_iH_i$ is a polynomial. Also, the identities \eqref{E.pui2} express the integers $n$ in term of these polynomials and points.

Suppose first that one of the cosets is the whole $\G_m^2$. Then $f$ is a polynomial in its arguments, and \eqref{E.pui2} says it satisfies $P_1(x^{-1}f(x,y))=(c_1y+c_2)x^{-d}$. This entails that $d=1$, so $P_1(n)=an+b$ and $af(x,y)=c_1y+c_2-bx$.  This yields the required shape for the solutions $n$.

Suppose now that each of the cosets is proper. We can focus on the cosets of dimension $1$ and containing infinitely many elements $\x$, the others giving rise to finitely many solutions. Since $\x$ tends to zero the coset may be   parametrized by $x=t^a, y=\mu t^b$, with coprime  integers $a,b>0$ and a nonzero constant $\mu$. The restriction of $f$ to this coset corresponds to $f(t^a,\mu t^b)$, which is thus a polynomial, denoted $\phi(t)$. We also have
\begin{equation*}
P_1(t^{-a}\phi(t))=c_1\mu t^{b-da}+c_2t^{-da}.
\end{equation*}
We see that $\phi(0)\neq 0$ and if $c:=\deg \phi$ we have $b=dc$.  Thus the left side is a Laurent-polynomial in  $t^d$, and is thus invariant by $t\mapsto \zeta t$ with any $d$-th root of unity $\zeta$. We could exploit this fact through Galois groups, but we choose a more direct way to describe the situation completely.

Our solutions are obtained by substitutions $t= p^{-n_2/(da)}$ and we have $$da\ell=bn_2\text{, }n_2-\frac{bn_2}{da}=n_1\ge 0,$$ 
so $b<da$.  Setting $1/t$ in place of $t$ and changing slightly notation, we may write
\begin{equation}\label{E.P_1}
P_1(\psi(t))=c_1t^{du}+c_2t^{da},
\end{equation}
where $0\le u=a -c<a$ and where $\psi(t)=t^a\phi(t^{-1})$. Note that $\psi(t)$ is necessarily a polynomial, of degree $a$.

If $d=1$ our conclusion follows at once, so suppose $d>1$. If $\rho$ is a root of $P_1$ of multiplicity $\mu>1$ then necessarily $\psi(t)-\rho$ equals a constant times $t^{du}$, since the right side of \eqref{E.P_1} has only $t=0$ as a multiple root. This again yields the required representations for $n$.

Suppose then that $P_1$ has no multiple roots. Still, $t^{du}$ shall divide exactly one factor $\psi(t)-\rho$, so $du\le \deg \psi=a$. 

Let now $Q(x)=P(x+\beta)$ where $\beta$ is chosen so that $Q$ has vanishing second coefficient, and let $\xi(t)=\psi(t)-\beta$, so $Q(\xi(t))=P_1(\psi(t))$.  For $\delta$ the leading coefficient of $Q$, we have 
\begin{equation}
\label{former inequality}
\deg(\delta\xi(t)^d-c_2t^{da})\le \max\{d\ell, (d-2)\deg\xi\}\le \max \{a, (d-2)a\}.
\end{equation}

However $\delta\xi(t)^d-c_2t^{da}$ is a product of $d$ terms of the shape $\gamma_1\xi(t)-\gamma_2 \zeta t^a$, where $\zeta$ runs over the $d$-th roots of unity, and at most  one term has degree $<a$. Hence either $\delta\xi(t)^d-c_2t^{da}$ vanishes, or it has degree $\ge (d-1)a$, and in  case of equality one of the factors has to be constant. 

Comparing with the inequality \eqref{former inequality}, in any case again we find that $\psi(t)$ is of the shape $\gamma t^a+\gamma'$, and we conclude as before.  

{\bf Case II}. We are left with the case when $n_1\ge n_2/2$. Now we argue in an entirely similar way, but expanding for $n$ at $q=0$ and using a $p$-adic absolute value in place of an archimedean one; however a few supplementary arguments are needed because when $n_1/n_2$ is very near to $1$ the Puiseux expansion may not  obey  the needed conditions. To dispose of this difficulty we argue as follows.

If $n_2-n_1$ is bounded for infinitely many solutions, then we fall essentially into the equation $P_1(n)=c_3p^{n_1}$; this is   standardly known, and may be solved even effectively for given $P_1,p$ (there are only finitely many solutions unless $P_1$ is a power of a linear polynomial). So we assume that $n_2-n_1$ tends to infinity for our solutions. 

First, $P_1(x)$ may be supposed clearly with rational coefficients, and let us consider  powers $Q(x)^m$ dividing it exactly, where $Q$ is an irreducible polynomial in $\Q[x]$ and $m\ge 1$.  Now, one such  factor at least will be such that $v(Q(n)^m)=n_1+O(1)$, while we must have bounded $p$-adic order for the other factors. 

Suppose first that there are no other non constant factors, so $P_1=cQ^m$;  then $c_1+c_2p^{n_2-n_1}$ must be  an $m$-th power, up to a factor taken from a finite set. By well-known results this leads to only finitely many solutions if $m>1$, so let us assume $m=1$. In this case there is no ramification above $q=0$, so we may expand the solutions of $P_1(n)=q$ at $q=0$ in a power series $n=\sum_{r=0}^\infty \gamma_rq^r$ converging in any absolute value for which $q$ is small enough.\footnote{Several power series may appear, corresponding to the various branches, represented by the roots of $P_1=0$.}  
We choose a $p$-adic place $\nu$ of a number field $K$ containing all the coefficients $\gamma_r$. Note that since we have no ramification, the powers of $q$ which appear are integers, and we may expand $q^r=(c_1p^{n_1}+c_2p^{n_2})^r$ with the binomial theorem, to obtain a series $f(x,y)=\sum_{a,b\ge 0}\sigma_{a,b}x^ay^b$, with $n=f(p^{n_1},p^{n_2})$ in the valuation $\nu$. Now the proof proceeds exactly as in Case I (actually even with the slight simplification of having no fractional powers involved). (This case actually falls as part of a result proved in \cite{CZ1}.)

Suppose now that there is some other non-constant  factor. Then, since the product of the other factors contributes at most essentially $p^{n_2-n_1}$,  we find  $p^{n_2-n_1}\gg n$, whereas $p^{n_2}\ll n^d$, so $n_1\le n_2(1-d^{-1})+O(1)$.  With this inequality, we can use a Puiseux expansion at $q=0$, even if there is ramification. Corresponding to each branch, we have an expansion $n=\sum_{r\ge 0}\gamma_rq^{r/e}$ for a suitable integer $e>0$, converging at any $p$-adic place $\nu$ for small enough $|q|_\nu$, i.e. for large enough $n_1$. We can further expand the powers $q^{r/e}$  as $q^{r/e}=c_1^{r/e}p^{rn_1/e}(1+(c_2/c_1)p^{n_2-n_1})^{r/e}$, using the binomial theorem on the right. The fact that $n_2-n_1\gg n_2$ enables us to obtain a sufficiently fast convergence in order to  apply again Theorem 1 
 of \cite{CZ} as in Case I. This  shall lead to the sought conclusion as above.
\end{proof}

\begin{remark}
It will be noted that for $n_1>\epsilon n_2$ (for any fixed $\epsilon >0$), we can use either argument for both cases.  Under such type of inequalities actually the arguments work for the representations of $u_n$ as the sum of any  prescribed number of terms of the shape $c_ip^{n_i}$. It is the lack of such inequalities that prevent the proof to go through for  the general case; so already for three terms one meets problems if $n_1/n_3$ tends to $0$ and $n_2/n_3$ tends to $1$ through a sequence of solutions. See \cite{CZ3} for devices to overcome this kind of difficulty in special cases. 

Also, the expansions used in the proof might be explained in geometric terms, after suitable blow-ups.
\end{remark}


\section{Proof of Theorems~\ref{surface result} and \ref{Frobenius result}}
\label{section proofs}

Using Theorem~\ref{theorem polynomial-exponential}, we deduce our main results.

\begin{proof}[Proof of Theorem~\ref{surface result}.]
Using Theorem~\ref{theorem reduction}, we know that the set 
$$S=\{n\in\N_0\colon \Phi^n(\alpha)\in V\}$$
(where $\Phi:\bG_m^N\lra \bG_m^N$ is a regular self-map and $V\subset \bG_m^N$ is a surface)  
is a finite union of generalized $F$-arithmetic sequences. Since the minimal polynomial $P_{{\rm min},F}\in\Z[x]$ of the Frobenius has a unique root equal to $p$, then each such generalized $F$-arithmetic sequence is  the intersection of finitely many $F$-arithmetic sequences, each one of them consisting of all non-negative integers $n$ which belong to a suitable arithmetic progression and furthermore, for which there exist some $m\in\N_0$ and some $n_1,\dots, n_m\in\N_0$ such that
\begin{equation}
\label{F sequence 1}
u_n=c_1p^{k_1n_1}+\cdots + c_mp^{k_mn_m},
\end{equation}
for some given linear recurrence sequence $\{u_n\}$, some given integers $c_i$ and some given non-negative integers $k_i$. Furthermore, by Theorem~\ref{theorem reduction}~(1), since $\dim(V)=2$, we have that at most two of the $k_i$'s in equation \eqref{F sequence 1} are nonzero. So, at the expense of replacing $u_n$ by $u_n-c$ (for a suitable integer), we may assume $m=2$. Then Theorem~\ref{theorem polynomial-exponential}~(B) yields that each such $F$-arithmetic sequence is a union of finitely many arithmetic progressions and finitely many $p$-sets of the form \eqref{set 2 p}. Then Propositions~\ref{prop:p-arithmetic and arithmetic}~and~\ref{prop:intersections of p-arithmetic sequences} yield that these finitely many intersections of $F$-arithmetic sequences are once again a finite union of arithmetic progressions and finitely many $p$-sets.
\end{proof}

\begin{proof}[Proof of Theorem~\ref{Frobenius result}.]
Again using Theorem~\ref{theorem reduction}, we obtain that the set $S$ consisting of all $n\in\N_0$ such that $\Phi^n(\alpha)\in V$ is a finite union of generalized $F$-arithmetic sequences, and furthermore, each such generalized $F$-arithmetic sequence is a finite intersection of finitely many $F$-arithmetic sequences, each one of them consisting of all non-negative integers $n$ belonging to a suitable arithmetic progression and furthermore, for which there exist $n_1,\dots,n_m\in\N_0$ such that
\begin{equation}
\label{F sequence 2}
u_n=c_1p^{k_1n_1}+\cdots + c_mp^{k_mn_n},
\end{equation}
for some given linear recurrence sequence $\{u_n\}$, some given $m\in\N_0$, some given integers $c_i$ and some given non-negative integers $k_i$. As shown in Theorem~\ref{theorem reduction}~(2), since $\Phi:\bG_m^N\lra\bG_m^N$ is a group endomorphism, the characteristic roots of the linear recurrence sequence $\{u_n\}$ from \eqref{F sequence 2} are positive integer powers of the roots of the minimal polynomial $P_{{\rm min},\Phi}\in\Z[x]$ of $\Phi$. Now, the hypothesis regarding $\Phi$ from Theorem~\ref{Frobenius result} yields that no root of $P_{{\rm min},\Phi}$ is  multiplicatively dependent with the prime $p$; then Theorem~\ref{theorem polynomial-exponential}~(A) finishes the proof of our result. Indeed, if $P_{{\rm min},\Phi}$ were to have a root $\lambda$ which is multiplicatively dependent with the prime $p$, then we would have that for some iterate $\Phi^r$ (for a suitable $r\in\N$) there is a root of its minimal polynomial $P_{{\rm min},\Phi^r}\in\Z[x]$ which equals $p^s$ for some $s\in\N_0$. Letting $G=\ker(\Phi^r-p^s{\rm Id})$, where ${\rm Id}$ is the identity morphism on $\bG_m^N$, we obtain that $G$ is a positive dimensional algebraic subgroup of $\bG_m^N$ and the induced action of $\Phi$ on $G$ is given by a power of the Frobenius, contradiction.

This concludes the proof of Theorem~\ref{Frobenius result}.
\end{proof}


\end{document}